\def \u {\mathop{\rm \mathcal{U}}\nolimits}
\def \tr {\mathop{\rm tr}\nolimits}
\def \re {\mathop{\rm Re}\nolimits}
\def \Vol {\mathop{\rm Vol}\nolimits}
\def \etr {\mathop{\rm etr}\nolimits}
\def \diag {\mathop{\rm diag}\nolimits}
\renewenvironment{abstract}
                 {\vspace{6pt}
                  \begin{center}
                  \begin{minipage}{5in}
                  \centerline{\textbf{Abstract}}
                  \noindent\ignorespaces
                 }
                 {\end{minipage}\end{center}}
\newtheorem{theorem}{\textbf{Theorem}}[section]
\newtheorem{corollary}{\textbf{Corollary}}[section]
\newtheorem{proposition}{\textbf{Proposition}}[section]
\theoremstyle{definition}
\newtheorem{definition}{\textbf{Definition}}[section]
\newtheorem{remark}{\textbf{Remark}}[section]
\title{\Large \textbf{Generalised matricvariate $T$-distribution}}
\author{
  \textbf{Jos\'e A. D\'{\i}az-Garc\'{\i}a} \thanks{Corresponding author\newline
   {\bf Key words.}  Matricvariate; $T$-distribution; Riesz distribution; Kotz-Riesz distribution; real, complex,
   quaternion and octonion random  matrices; real normed division algebras.\newline
    2000 Mathematical Subject Classification. 15A23; 15B33; 15A09; 15B52; 60E05}\\
  {\normalsize Department of Statistics and Computation} \\
  {\normalsize Universidad Autonoma Agraria Antonio Narro} \\
  {\normalsize 25350 Buenavista, Saltillo, Coahuila, Mexico} \\
  {\normalsize E-mail: jadiaz@uaaan.mx} \\[2ex]
  \textbf{Ram\'on Guti\'errez-S\'anchez} \\
  {\normalsize Department of Statistics and O.R} \\
  {\normalsize University of Granada} \\
  {\normalsize Granada 18071, Spain}\\
  {\normalsize E-mail: ramongs@ugr.es}\\
}
\date{}
\begin{document}
\maketitle

\begin{abstract}
Assuming Kotz-Riesz type I and II distributions and their corresponding independent Riesz
distributions the associated generalised matricvariate  $T$ distributions, termed matricvariate
$T$-Riesz distributions for real normed division algebras are obtained with respect to the
Lebesgue measure. In addition some of their properties are also studied.
\end{abstract}

\section{Introduction}\label{sec1}

Since the early 80's years the elliptical distribution family has been used as an alternative sampling
distribution for the classical restriction of normality. The elliptical distribution family, within other
quantities  of interest, are very attractive because if it is assumed that the random matrix, say
$\mathbf{X}$, has a matrix multivariate elliptical distribution, then  distributions of several matrix
functions, $\mathbf{Y} = f(\mathbf{X})$, are invariant under the family of elliptical distributions;
furthermore, such distributions coincide with those obtained when $\mathbf{X}$ is distributed according
to a matrix multivariate normal distribution, see \citet{fz:90} and \citet{gv:93}.

However it should be noted that the invariance described above occurs when a probabilistically dependency
is assumed. This is, if the matrix $\mathbf{X} = \left[
\begin{array}{c}
  \mathbf{X}_{1} \\
  \mathbf{X}_{2}
\end{array}
\right]$ has a matrix multivariate elliptical distribution, observing that $\mathbf{X}_{1}$ and
$\mathbf{X}_{2}$ are probabilistically dependent (note that $\mathbf{X}_{1}$ and $\mathbf{X}_{2}$ are
probabilistically independent if $\mathbf{X}$ has a matrix multivariate normal distribution,
\citet{gv:93}). Then if $\mathbf{X}'$ denotes the transpose of $\mathbf{X}$ and if $\mathbf{A}$ is
non-negative definite matrix and $\u(\mathbf{A})$ denotes a upper triangular matrix, such that
$\mathbf{A}=\u(\mathbf{A})^{'}\u(\mathbf{A})$ defines the Cholesky's decomposition of $\mathbf{A}$
\citep{m:82}, then:
\begin{itemize}
  \item Let $\mathbf{T} = \mathbf{X}_{1}\u(\mathbf{X}'_{2}\mathbf{X}_{2})^{-1}$. It is said
  that $\mathbf{T}$ is distributed according to a matricvariate \footnote{The term matricvariate
  distribution was first introduced \citet{di:67}, but the expression matrix-variate distribution
  or matrix variate distribution or matrix multivariate distribution was later used to describe
  any distribution of a random matrix, see \citet{gn:00}, and references therein. When the density
  function of a random matrix is written only in terms of determinant operator and $q_{\kappa}(\cdot)$
  (defined in the next section) then the matricvariate designation shall be used} $T$- distribution.

  \item Let $\mathbf{F} =  \u(\mathbf{X}'_{2}\mathbf{X}_{2})^{'-1}(\mathbf{X}'_{1}\mathbf{X}_{1})
  \u(\mathbf{X}'_{2}\mathbf{X}_{2})^{-1}$. It is said that $\mathbf{F}$ has a matricvariate
  beta type II distribution, and

  \item let $\mathbf{B} = \u\left(\mathbf{X}'_{1}\mathbf{X}_{1} + \mathbf{X}'_{2}\mathbf{X}_{2}
  \right)^{'-1}(\mathbf{X}'_{1}\mathbf{X}_{1})\u(\mathbf{X}'_{1}\mathbf{X}_{1} +
  \mathbf{X}'_{2}\mathbf{X}_{2})^{-1}$. It is said
  that $\mathbf{B}$ is distributed according to a matricvariate  beta type I distribution,
\end{itemize}
where the matricvariate $T$, beta type II and beta type I distributions are the same distributions as
those obtained when $\mathbf{X}$ has a matrix multivariate normal distribution, see \citet{fz:90} and
\citet{gv:93}.

Unfortunately, in some situations, $\mathbf{X}_{1}$ and $\mathbf{X}_{2}$ are assumed independent. This
situation can occur in the context of multivariate Bayesian inference, \citet{p:82}. For example, suppose
that a particular distribution depend of two matrix parameters, say $\boldsymbol{\theta_{1}}$ and
$\boldsymbol{\theta_{2}}$; and is assumed that $\boldsymbol{\theta_{1}}$ and $\boldsymbol{\theta_{2}}$
have a distribution as the marginal distributions of $\mathbf{X}_{1}$ and $\mathbf{X}_{2}$ respectively,
but in this case is assumed that $\boldsymbol{\theta_{1}}$ and $\boldsymbol{\theta_{2}}$ are
\emph{independent}. Under this hypothesis, one is interested in finding the priori distribution of a
parameter type $\mathbf{T}$, $\mathbf{F}$ or $\mathbf{B}$ in terms of the priori distribution of
$\boldsymbol{\theta_{1}}$ and $\boldsymbol{\theta_{2}}$. In this case priori distributions of
$\mathbf{T}$, $\mathbf{F}$ or $\mathbf{B}$ are different from those obtained under dependence, moreover,
such priori distributions are different under each particular elliptical distribution assumed.

Now, in the matrix multivariate elliptical distribution setting the so termed Kotz-Riesz distribution
involves some importance by its relation with the Riesz distribution. If $\mathbf{X}$ is distributed
according to matrix multivariate Kotz-Riesz, then the matrix $\mathbf{V} = \mathbf{X}^{'}\mathbf{X}$ has
a Riesz distribution, \citet{dg:15c}. The Riesz distributions, were  first introduced by \citet{hl:01}
under the name of Riesz natural exponential family (Riesz NEF); it was based on a special case of the
so-called Riesz measure from \citet[p.137]{fk:94}. This Riesz distribution generalises the matrix
multivariate gamma and Wishart distributions, containing them as particular cases.

In analogy with the case of $T$-distribution obtained under normality, there exist two possible
generalisations of it when a Kotz-Riesz distribution is assumed, the matricvariate $T$-distribution and
the matrix multivariate $T$-distribution, see \citet{dggj:12}. In particular this paper will focus on the
matricvariate $T$-Riesz distribution.

The paper is organized as follows; first,  some basic concepts and the notation of abstract algebra and
Jacobians are summarised in Section \ref{sec2}. Then the nonsingular central matricvariate $T$-Riesz and
the corresponding generalised beta type II distributions and some of their basic properties are studied
in sections \ref{sec3} and \ref{sec4}, respectively. It should be noted that all these results are
derived in the context of  real normed division algebras, a useful integrated and unified approach
recently implemented in matrix distribution theory.

\section{Preliminary results}\label{sec2}

A detailed discussion of real normed division algebras may be found in \citet{b:02} and \citet{E:90}. For
convenience, we shall introduce some notation, although in general we adhere to standard notation forms.

For our purposes: Let $\mathbb{F}$ be a field. An \emph{algebra} $\mathfrak{A}$ over $\mathbb{F}$ is a
pair $(\mathfrak{A};m)$, where $\mathfrak{A}$ is a \emph{finite-dimensional vector space} over
$\mathbb{F}$ and \emph{multiplication} $m : \mathfrak{A} \times \mathfrak{A} \rightarrow A$ is an
$\mathbb{F}$-bilinear map; that is, for all $\lambda \in \mathbb{F},$ $x, y, z \in \mathfrak{A}$,
\begin{eqnarray*}
  m(x, \lambda y + z) &=& \lambda m(x; y) + m(x; z) \\
  m(\lambda x + y; z) &=& \lambda m(x; z) + m(y; z).
\end{eqnarray*}
Two algebras $(\mathfrak{A};m)$ and $(\mathfrak{E}; n)$ over $\mathbb{F}$ are said to be
\emph{isomorphic} if there is an invertible map $\phi: \mathfrak{A} \rightarrow \mathfrak{E}$ such that
for all $x, y \in \mathfrak{A}$,
$$
  \phi(m(x, y)) = n(\phi(x), \phi(y)).
$$
By simplicity, we write $m(x; y) = xy$ for all $x, y \in \mathfrak{A}$.

Let $\mathfrak{A}$ be an algebra over $\mathbb{F}$. Then $\mathfrak{A}$ is said to be
\begin{enumerate}
  \item \emph{alternative} if $x(xy) = (xx)y$ and $x(yy) = (xy)y$ for all $x, y \in \mathfrak{A}$,
  \item \emph{associative} if $x(yz) = (xy)z$ for all $x, y, z \in \mathfrak{A}$,
  \item \emph{commutative} if $xy = yx$ for all $x, y \in \mathfrak{A}$, and
  \item \emph{unital} if there is a $1 \in \mathfrak{A}$ such that $x1 = x = 1x$ for all $x \in \mathfrak{A}$.
\end{enumerate}
If $\mathfrak{A}$ is unital, then the identity 1 is uniquely determined.

An algebra $\mathfrak{A}$ over $\mathbb{F}$ is said to be a \emph{division algebra} if $\mathfrak{A}$ is
nonzero and $xy = 0_{\mathfrak{A}} \Rightarrow x = 0_{\mathfrak{A}}$ or $y = 0_{\mathfrak{A}}$ for all
$x, y \in \mathfrak{A}$.

The term ``division algebra", comes from the following proposition, which shows that, in such an algebra,
left and right division can be unambiguously performed.

Let $\mathfrak{A}$ be an algebra over $\mathbb{F}$. Then $\mathfrak{A}$ is a division algebra if, and
only if, $\mathfrak{A}$ is nonzero and for all $a, b \in \mathfrak{A}$, with $b \neq 0_{\mathfrak{A}}$,
the equations $bx = a$ and $yb = a$ have unique solutions $x, y \in \mathfrak{A}$.

In the sequel we assume $\mathbb{F} = \Re$ and consider classes of division algebras over $\Re$ or
``\emph{real division algebras}" for short.

We introduce the algebras of \emph{real numbers} $\Re$, \emph{complex numbers} $\mathfrak{C}$,
\emph{quaternions} $\mathfrak{H}$ and \emph{octonions} $\mathfrak{O}$. Then, if $\mathfrak{A}$ is an
alternative real division algebra, then $\mathfrak{A}$ is isomorphic to $\Re$, $\mathfrak{C}$,
$\mathfrak{H}$ or $\mathfrak{O}$.

Let $\mathfrak{A}$ be a real division algebra with identity $1$. Then $\mathfrak{A}$ is said to be
\emph{normed} if there is an inner product $(\cdot, \cdot)$ on $\mathfrak{A}$ such that
$$
  (xy, xy) = (x, x)(y, y) \qquad \mbox{for all } x, y \in \mathfrak{A}.
$$
If $\mathfrak{A}$ is a \emph{real normed division algebra}, then $\mathfrak{A}$ is isomorphic $\Re$,
$\mathfrak{C}$, $\mathfrak{H}$ or $\mathfrak{O}$.

There are exactly four normed division algebras: real numbers ($\Re$), complex numbers ($\mathfrak{C}$),
quaternions ($\mathfrak{H}$) and octonions ($\mathfrak{O}$), see \citet{b:02}. We take into account that
should be taken into account, $\Re$, $\mathfrak{C}$, $\mathfrak{H}$ and $\mathfrak{O}$ are the only
normed division algebras; furthermore, they are the only alternative division algebras.

Let $\mathfrak{A}$ be a division algebra over the real numbers. Then $\mathfrak{A}$ has dimension either
1, 2, 4 or 8. In other branches of mathematics, the parameters $\alpha = 2/\beta$ and $t = \beta/4$ are
used, see \citet{er:05} and \citet{k:84}, respectively.

Finally, observe that

\begin{tabular}{c}
  $\Re$ is a real commutative associative normed division algebras, \\
  $\mathfrak{C}$ is a commutative associative normed division algebras,\\
  $\mathfrak{H}$ is an associative normed division algebras, \\
  $\mathfrak{O}$ is an alternative normed division algebras. \\
\end{tabular}

Let $\mathfrak{L}^{\beta}_{n,m}$ be the set of all $n \times m$ matrices of rank $m \leq n$ over
$\mathfrak{A}$ with $m$ distinct positive singular values, where $\mathfrak{A}$ denotes a \emph{real
finite-dimensional normed division algebra}. Let $\mathfrak{A}^{n \times m}$ be the set of all $n \times
m$ matrices over $\mathfrak{A}$. The dimension of $\mathfrak{A}^{n \times m}$ over $\Re$ is $\beta mn$.
Let $\mathbf{A} \in \mathfrak{A}^{n \times m}$, then $\mathbf{A}^{*} = \bar{\mathbf{A}}^{T}$ denotes the
usual conjugate transpose.

Table \ref{table1} sets out the equivalence between the same concepts in the four normed division
algebras.

\begin{table}[th]
  \centering
  \caption{\scriptsize Notation}\label{table1}
  \begin{scriptsize}
  \begin{tabular}{cccc|c}
    \hline
    Real & Complex & Quaternion & Octonion & \begin{tabular}{c}
                                               Generic \\
                                               notation \\
                                             \end{tabular}\\
    \hline
    Semi-orthogonal & Semi-unitary & Semi-symplectic & \begin{tabular}{c}
                                                         Semi-exceptional \\
                                                         type \\
                                                       \end{tabular}
      & $\mathcal{V}_{m,n}^{\beta}$ \\
    Orthogonal & Unitary & Symplectic & \begin{tabular}{c}
                                                         Exceptional \\
                                                         type \\
                                                       \end{tabular} & $\mathfrak{U}^{\beta}(m)$ \\
    Symmetric & Hermitian & \begin{tabular}{c}
                              Quaternion \\
                              hermitian \\
                            \end{tabular}
     & \begin{tabular}{c}
                              Octonion \\
                              hermitian \\
                            \end{tabular} & $\mathfrak{S}_{m}^{\beta}$ \\
    \hline
  \end{tabular}
  \end{scriptsize}
\end{table}

We denote by ${\mathfrak S}_{m}^{\beta}$ the real vector space of all $\mathbf{S} \in \mathfrak{A}^{m
\times m}$ such that $\mathbf{S} = \mathbf{S}^{*}$. In addition, let $\mathfrak{P}_{m}^{\beta}$ be the
\emph{cone of positive definite matrices} $\mathbf{S} \in \mathfrak{A}^{m \times m}$. Thus,
$\mathfrak{P}_{m}^{\beta}$ consist of all matrices $\mathbf{S} = \mathbf{X}^{*}\mathbf{X}$, with
$\mathbf{X} \in \mathfrak{L}^{\beta}_{n,m}$; then $\mathfrak{P}_{m}^{\beta}$ is an open subset of
${\mathfrak S}_{m}^{\beta}$.

Let $\mathfrak{D}_{m}^{\beta}$ consisting of all $\mathbf{D} \in \mathfrak{A}^{m \times m}$, $\mathbf{D}
= \diag(d_{1}, \dots,d_{m})$. Let $\mathfrak{T}_{U}^{\beta}(m)$ be the subgroup of all \emph{upper
triangular} matrices $\mathbf{T} \in \mathfrak{A}^{m \times m}$ such that $t_{ij} = 0$ for $1 < i < j
\leq m$.

For any matrix $\mathbf{X} \in \mathfrak{A}^{n \times m}$, $d\mathbf{X}$ denotes the\emph{ matrix of
differentials} $(dx_{ij})$. Finally, we define the \emph{measure} or volume element $(d\mathbf{X})$ when
$\mathbf{X} \in \mathfrak{A}^{n \times m}, \mathfrak{S}_{m}^{\beta}$, $\mathfrak{D}_{m}^{\beta}$ or
$\mathcal{V}_{m,n}^{\beta}$, see \citet{dggj:11} and \citet{dggj:13}.

If $\mathbf{X} \in \mathfrak{A}^{n \times m}$ then $(d\mathbf{X})$ (the Lebesgue measure in
$\mathfrak{A}^{n \times m}$) denotes the exterior product of the $\beta mn$ functionally independent
variables
$$
  (d\mathbf{X}) = \bigwedge_{i = 1}^{n}\bigwedge_{j = 1}^{m}dx_{ij} \quad \mbox{ where }
    \quad dx_{ij} = \bigwedge_{k = 1}^{\beta}dx_{ij}^{(k)}.
$$

If $\mathbf{S} \in \mathfrak{S}_{m}^{\beta}$ (or $\mathbf{S} \in \mathfrak{T}_{U}^{\beta}(m)$ with
$t_{ii} >0$, $i = 1, \dots,m$) then $(d\mathbf{S})$ (the Lebesgue measure in $\mathfrak{S}_{m}^{\beta}$
or in $\mathfrak{T}_{U}^{\beta}(m)$) denotes the exterior product of the exterior product of the
$m(m-1)\beta/2 + m$ functionally independent variables,
$$
  (d\mathbf{S}) = \bigwedge_{i=1}^{m} ds_{ii}\bigwedge_{i > j}^{m}\bigwedge_{k = 1}^{\beta}
                      ds_{ij}^{(k)}.
$$
Observe, that for the Lebesgue measure $(d\mathbf{S})$ defined thus, it is required that $\mathbf{S} \in
\mathfrak{P}_{m}^{\beta}$, that is, $\mathbf{S}$ must be a non singular Hermitian matrix (Hermitian
definite positive matrix).

If $\mathbf{\Lambda} \in \mathfrak{D}_{m}^{\beta}$ then $(d\mathbf{\Lambda})$ (the Legesgue measure in
$\mathfrak{D}_{m}^{\beta}$) denotes the exterior product of the $\beta m$ functionally independent
variables
$$
  (d\mathbf{\Lambda}) = \bigwedge_{i = 1}^{n}\bigwedge_{k = 1}^{\beta}d\lambda_{i}^{(k)}.
$$
If $\mathbf{H}_{1} \in \mathcal{V}_{m,n}^{\beta}$ then
$$
  (\mathbf{H}^{*}_{1}d\mathbf{H}_{1}) = \bigwedge_{i=1}^{m} \bigwedge_{j =i+1}^{n}
  \mathbf{h}_{j}^{*}d\mathbf{h}_{i}.
$$
where $\mathbf{H} = (\mathbf{H}^{*}_{1}|\mathbf{H}^{*}_{2})^{*} = (\mathbf{h}_{1}, \dots,
\mathbf{h}_{m}|\mathbf{h}_{m+1}, \dots, \mathbf{h}_{n})^{*} \in \mathfrak{U}^{\beta}(n)$. It can be
proved that this differential form does not depend on the choice of the $\mathbf{H}_{2}$ matrix. When $n
= 1$; $\mathcal{V}^{\beta}_{m,1}$ defines the unit sphere in $\mathfrak{A}^{m}$. This is, of course, an
$(m-1)\beta$- dimensional surface in $\mathfrak{A}^{m}$. When $n = m$ and denoting $\mathbf{H}_{1}$ by
$\mathbf{H}$, $(\mathbf{H}d\mathbf{H}^{*})$ is termed the \emph{Haar measure} on
$\mathfrak{U}^{\beta}(m)$.

The surface area or volume of the Stiefel manifold $\mathcal{V}^{\beta}_{m,n}$ is
\begin{equation}\label{vol}
    \Vol(\mathcal{V}^{\beta}_{m,n}) = \int_{\mathbf{H}_{1} \in
  \mathcal{V}^{\beta}_{m,n}} (\mathbf{H}_{1}d\mathbf{H}^{*}_{1}) =
  \frac{2^{m}\pi^{mn\beta/2}}{\Gamma^{\beta}_{m}[n\beta/2]},
\end{equation}
where $\Gamma^{\beta}_{m}[a]$ denotes the multivariate \emph{Gamma function} for the space
$\mathfrak{S}_{m}^{\beta}$. This can be obtained as a particular case of the \emph{generalised gamma
function of weight $\kappa$} for the space $\mathfrak{S}^{\beta}_{m}$ with $\kappa = (k_{1}, k_{2},
\dots, k_{m}) \in \Re^{m}$, taking $\kappa =(0,0,\dots,0) \in \Re^{m}$ and which for $\re(a) \geq
(m-1)\beta/2 - k_{m}$ is defined by, see \citet{gr:87} and \citet{fk:94},
\begin{eqnarray}\label{int1}
  \Gamma_{m}^{\beta}[a,\kappa] &=& \displaystyle\int_{\mathbf{A} \in \mathfrak{P}_{m}^{\beta}}
  \etr\{-\mathbf{A}\} |\mathbf{A}|^{a-(m-1)\beta/2 - 1} q_{\kappa}(\mathbf{A}) (d\mathbf{A}) \\
&=& \pi^{m(m-1)\beta/4}\displaystyle\prod_{i=1}^{m} \Gamma[a + k_{i}
    -(i-1)\beta/2]\nonumber\\ \label{gammagen1}
&=& [a]_{\kappa}^{\beta} \Gamma_{m}^{\beta}[a],
\end{eqnarray}
where $\etr(\cdot) = \exp(\tr(\cdot))$, $|\cdot|$ denotes the determinant, and for $\mathbf{A} \in
\mathfrak{S}_{m}^{\beta}$
\begin{equation}\label{hwv}
    q_{\kappa}(\mathbf{A}) = |\mathbf{A}_{m}|^{k_{m}}\prod_{i = 1}^{m-1}|\mathbf{A}_{i}|^{k_{i}-k_{i+1}}
\end{equation}
with $\mathbf{A}_{p} = (a_{rs})$, $r,s = 1, 2, \dots, p$, $p = 1,2, \dots, m$ is termed the \emph{highest
weight vector}, see \citet{gr:87}. Also,
\begin{eqnarray*}
  \Gamma_{m}^{\beta}[a] &=& \displaystyle\int_{\mathbf{A} \in \mathfrak{P}_{m}^{\beta}}
  \etr\{-\mathbf{A}\} |\mathbf{A}|^{a-(m-1)\beta/2 - 1}(d\mathbf{A}) \\ \label{cgamma}
    &=& \pi^{m(m-1)\beta/4}\displaystyle\prod_{i=1}^{m} \Gamma[a-(i-1)\beta/2],
\end{eqnarray*}
and $\re(a)> (m-1)\beta/2$.

In other branches of mathematics the \textit{highest weight vector} $q_{\kappa}(\mathbf{A})$ is also
termed the \emph{generalised power} of $\mathbf{A}$ and is denoted as $\Delta_{\kappa}(\mathbf{A})$, see
\citet{fk:94} and \citet{hl:01}.

Additional properties of $q_{\kappa}(\mathbf{A})$, which are immediate consequences of the definition of
$q_{\kappa}(\mathbf{A})$ are:
\begin{enumerate}
  \item Let $\mathbf{A} = \mathbf{L}^{*}\mathbf{DL}$ be the L'DL decomposition of $\mathbf{A} \in \mathfrak{P}_{m}^{\beta}$,
        where $\mathbf{L} \in \mathfrak{T}_{U}^{\beta}(m)$ with $l_{ii} = 1$, $i = 1, 2, \ldots ,m$ and
        $\mathbf{D} = \diag(\lambda_{1}, \dots, \lambda_{m})$, $\lambda_{i} \geq 0$, $i = 1, 2, \ldots
        ,m$. Then
        \begin{equation}\label{qk1}
          q_{\kappa}(\mathbf{A}) = \prod_{i=1}^{m} \lambda_{i}^{k_{i}}.
        \end{equation}
      \item
      \begin{equation}\label{qk2}
        q_{\kappa}(\mathbf{A}^{-1}) =  q_{-\kappa^{*}}^{*}(\mathbf{A}),
      \end{equation}
      where $\kappa^{*}=(k_{m}, k_{m-1}, \dots,k_{1})$, $-\kappa^{*}=(-k_{m}, -k_{m-1},
      \dots,-k_{1})$,
      \begin{equation}\label{hhwv}
         q_{\kappa}^{*}(\mathbf{A}) = |\mathbf{A}_{m}|^{k_{m}}\prod_{i = 1}^{m-1}|\mathbf{A}_{i}|^{k_{i}-k_{i+1}}
      \end{equation}
      and
      \begin{equation}\label{qqk1}
        q_{\kappa}^{*}(\mathbf{A}) = \prod_{i=1}^{m} \lambda_{i}^{k_{m-i+1}},
      \end{equation}
      see \citet[pp. 126-127 and Proposition VII.1.5]{fk:94}.

  Alternatively, let $\mathbf{A} = \mathbf{T}^{*}\mathbf{T}$ the Cholesky's decomposition of
  matrix $\mathbf{A} \in \mathfrak{P}_{m}^{\beta}$, with $\mathbf{T}=(t_{ij}) \in
  \mathfrak{T}_{U}^{\beta}(m)$, then $\lambda_{i} = t_{ii}^{2}$, $t_{ii} \geq 0$, $i = 1, 2,
  \ldots ,m$. See \citet[p. 931, first paragraph]{hl:01}, \citet[p. 390, lines -11 to
  -16]{hlz:05} and \citet[p.5, lines 1-6]{k:14}.
  \item if $\kappa = (p, \dots, p)$, then
    \begin{equation}\label{qk3}
        q_{\kappa}(\mathbf{A}) = |\mathbf{A}|^{p},
    \end{equation}
    in particular if $p=0$, then $q_{\kappa}(\mathbf{A}) = 1$.
  \item if $\tau = (t_{1}, t_{2}, \dots, t_{m})$, $t_{1}\geq t_{2}\geq \cdots \geq t_{m} \geq
  0$, then
    \begin{equation}\label{qk41}
        q_{\kappa+\tau}(\mathbf{A}) = q_{\kappa}(\mathbf{A})q_{\tau}(\mathbf{A}),
    \end{equation}
    in particular if $\tau = (p,p, \dots, p)$,  then
    \begin{equation}\label{qk42}
        q_{\kappa+\tau}(\mathbf{A}) \equiv q_{\kappa+p}(\mathbf{A}) = |\mathbf{A}|^{p} q_{\kappa}(\mathbf{A}).
    \end{equation}
    \item Finally, for $\mathbf{B} \in \mathfrak{T}_{U}^{\beta}(m)$  in such a manner that $\mathbf{C} =
    \mathbf{B}^{*}\mathbf{B} \in \mathfrak{S}_{m}^{\beta}$,
    \begin{equation}\label{qk5}
        q_{\kappa}(\mathbf{B}^{*}\mathbf{AB}) = q_{\kappa}(\mathbf{C})q_{\kappa}(\mathbf{A})
    \end{equation}
    and
    \begin{equation}\label{qk6}
        q_{\kappa}(\mathbf{B}^{*-1}\mathbf{A}\mathbf{B}^{-1}) = (q_{\kappa}(\mathbf{C}))^{-1}q_{\kappa}(\mathbf{A})
        = q_{-\kappa}(\mathbf{C})q_{\kappa}(\mathbf{A}),
    \end{equation}
see \citet[p. 776, eq. (2.1)]{hlz:08}.
\end{enumerate}
\begin{remark}
Let $\mathcal{P}(\mathfrak{S}_{m}^{\beta})$ denote the algebra of all polynomial functions on
$\mathfrak{S}_{m}^{\beta}$, and $\mathcal{P}_{k}(\mathfrak{S}_{m}^{\beta})$ the subspace of homogeneous
polynomials of degree $k$ and let $\mathcal{P}^{\kappa}(\mathfrak{S}_{m}^{\beta})$ be an irreducible
subspace of $\mathcal{P}(\mathfrak{S}_{m}^{\beta})$ such that
$$
  \mathcal{P}_{k}(\mathfrak{S}_{m}^{\beta}) = \sum_{\kappa}\bigoplus
  \mathcal{P}^{\kappa}(\mathfrak{S}_{m}^{\beta}).
$$
Note that $q_{\kappa}$ is a homogeneous polynomial of degree $k$, moreover $q_{\kappa} \in
\mathcal{P}^{\kappa}(\mathfrak{S}_{m}^{\beta})$, see \citet{gr:87}.
\end{remark}
In (\ref{gammagen1}), $[a]_{\kappa}^{\beta}$ denotes the generalised Pochhammer symbol of weight
$\kappa$, defined as
\begin{eqnarray*}
  [a]_{\kappa}^{\beta} &=& \prod_{i = 1}^{m}(a-(i-1)\beta/2)_{k_{i}}\\
    &=& \frac{\pi^{m(m-1)\beta/4} \displaystyle\prod_{i=1}^{m}
    \Gamma[a + k_{i} -(i-1)\beta/2]}{\Gamma_{m}^{\beta}[a]} \\
    &=& \frac{\Gamma_{m}^{\beta}[a,\kappa]}{\Gamma_{m}^{\beta}[a]},
\end{eqnarray*}
where $\re(a) > (m-1)\beta/2 - k_{m}$ and
$$
  (a)_{i} = a (a+1)\cdots(a+i-1),
$$
is the standard Pochhammer symbol.

An alternative definition of the generalised gamma function of weight $\kappa$ is proposed by
\citet{k:66}, which is defined as%
\begin{eqnarray}\label{int2}
  \Gamma_{m}^{\beta}[a,-\kappa] &=& \displaystyle\int_{\mathbf{A} \in \mathfrak{P}_{m}^{\beta}}
    \etr\{-\mathbf{A}\} |\mathbf{A}|^{a-(m-1)\beta/2 - 1} q_{\kappa}(\mathbf{A}^{-1})
    (d\mathbf{A}) \\
&=& \pi^{m(m-1)\beta/4}\displaystyle\prod_{i=1}^{m} \Gamma[a - k_{i}
    -(m-i)\beta/2] \nonumber\\ \label{gammagen2}
&=& \displaystyle\frac{(-1)^{k} \Gamma_{m}^{\beta}[a]}{[-a +(m-1)\beta/2
    +1]_{\kappa}^{\beta}} ,
\end{eqnarray}
where $\re(a) > (m-1)\beta/2 + k_{1}$.

In addition consider the following generalised beta functions termed, \emph{generalised c-beta function},
see \citet[p. 130]{fk:94} and \citet{dg:15b},
$$
  \mathcal{B}_{m}^{\beta}[a,\kappa;b, \tau] \hspace{10cm}
$$
$$
  \hspace{.5cm}= \int_{\mathbf{0}<\mathbf{S}<\mathbf{I}_{m}}
    |\mathbf{S}|^{a-(m-1)\beta/2-1} q_{\kappa}(\mathbf{S})|\mathbf{I}_{m} - \mathbf{S}|^{b-(m-1)\beta/2-1}
    q_{\tau}(\mathbf{I}_{m} - \mathbf{S})(d\mathbf{S})
$$
$$
   = \int_{\mathbf{R} \in\mathfrak{P}_{m}^{\beta}} |\mathbf{R}|^{a-(m-1)\beta/2-1}
    q_{\kappa}(\mathbf{R})|\mathbf{I}_{m} + \mathbf{R}|^{-(a+b)} q_{-(\kappa+\tau)}(\mathbf{I}_{m} + \mathbf{R})
    (d\mathbf{R})
$$
$$
   = \frac{\Gamma_{m}^{\beta}[a,\kappa] \Gamma_{m}^{\beta}[b,\tau]}{\Gamma_{m}^{\beta}[a+b,
    \kappa+\tau]},\hspace{8.5cm}
$$
where $\kappa = (k_{1}, k_{2}, \dots, k_{m}) \in \Re^{m}$, $\tau = (t_{1}, t_{2}, \dots, t_{m}) \in
\Re^{m}$, Re$(a)> (m-1)\beta/2-k_{m}$ and Re$(b)> (m-1)\beta/2-t_{m}$. Similarly is defined the
\emph{generalised k-beta function} as, see \citet{dg:15b},
$$
  \mathcal{B}_{m}^{\beta}[a,-\kappa;b, -\tau]  \hspace{10cm}
$$
$$
    \hspace{.1cm}=\int_{\mathbf{0}<\mathbf{S}<\mathbf{I}_{m}}
    |\mathbf{S}|^{a-(m-1)\beta/2-1} q_{\kappa}(\mathbf{S}^{-1})|\mathbf{I}_{m} - \mathbf{S}|^{b-(m-1)\beta/2-1}
    q_{\tau}\left((\mathbf{I}_{m} - \mathbf{S})^{-1}\right)(d\mathbf{S})
$$
$$
  =\int_{\mathbf{R} \in\mathfrak{P}_{m}^{\beta}} |\mathbf{R}|^{a-(m-1)\beta/2-1}
    q_{\kappa}(\mathbf{R}^{-1})|\mathbf{I}_{m} + \mathbf{R}|^{-(a+b)} q_{-(\kappa+\tau)}
    \left((\mathbf{I}_{m} + \mathbf{R})^{-1}\right)(d\mathbf{R})\hspace{1cm}
$$
$$
    = \frac{\Gamma_{m}^{\beta}[a,-\kappa] \Gamma_{m}^{\beta}[b,-\tau]}{\Gamma_{m}^{\beta}[a+b,
    -\kappa-\tau]},\hspace{8.5cm}
$$
where $\kappa = (k_{1}, k_{2}, \dots, k_{m}) \in \Re^{m}$, $\tau = (t_{1}, t_{2}, \dots, t_{m}) \in
\Re^{m}$, Re$(a)> (m-1)\beta/2+k_{1}$ and Re$(b)> (m-1)\beta/2+t_{1}$.

Finally, the following Jacobians involving the $\beta$ parameter, reflects the generalised power of the
algebraic technique; the can be seen as extensions of the full derived and unconnected results in the
real, complex or quaternion cases, see \citet{fk:94} and \citet{dggj:11}. These results are the base for
several matrix and matric variate generalised analysis.

\begin{proposition}\label{lemlt}
Let $\mathbf{X}$ and $\mathbf{Y} \in \mathfrak{L}_{n,m}^{\beta}$  be matrices of functionally independent
variables, and let $\mathbf{Y} = \mathbf{AXB} + \mathbf{C}$, where $\mathbf{A} \in
\mathfrak{L}_{n,n}^{\beta}$, $\mathbf{B} \in \mathfrak{L}_{m,m}^{\beta}$ and $\mathbf{C} \in
\mathfrak{L}_{n,m}^{\beta}$ are constant matrices. Then
\begin{equation}\label{lt}
    (d\mathbf{Y}) = |\mathbf{A}^{*}\mathbf{A}|^{m\beta/2} |\mathbf{B}^{*}\mathbf{B}|^{
    mn\beta/2}(d\mathbf{X}).
\end{equation}
\end{proposition}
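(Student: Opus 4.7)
The plan is to reduce the transformation in two stages and apply translation invariance first, then handle left and right matrix multiplication separately.

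Since $(d\mathbf{Y})$ is the Lebesgue measure on $\mathfrak{A}^{n\times m}$ viewed as a real vector space of dimension $\beta mn$, it is translation invariant, so the constant $\mathbf{C}$ contributes Jacobian $1$ and we may replace $\mathbf{Y} = \mathbf{AXB}+\mathbf{C}$ by $\mathbf{Y} = \mathbf{AXB}$. I would then split this linear map through the intermediate matrix $\mathbf{Z} = \mathbf{XB}$, writing $\mathbf{Y} = \mathbf{AZ}$, so that the total Jacobian factors as a product of the Jacobians of the two steps.

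For the left-multiplication step $\mathbf{Y} = \mathbf{AZ}$, the map decouples across columns: each column $\mathbf{y}_j = \mathbf{A}\mathbf{z}_j$ is an independent copy of the single-vector transformation on $\mathfrak{A}^n$. The key lemma here is the one-vector Jacobian $(d\mathbf{y}_j) = |\mathbf{A}^{*}\mathbf{A}|^{\beta/2}(d\mathbf{z}_j)$, which one establishes by QR-decomposing $\mathbf{A} = \mathbf{QT}$ with $\mathbf{Q}\in\mathfrak{U}^{\beta}(n)$ and $\mathbf{T}\in\mathfrak{T}_{U}^{\beta}(n)$: the unitary factor $\mathbf{Q}$ acts isometrically on $\mathfrak{A}^n$ and so preserves the measure, while the triangular factor contributes $\prod_i t_{ii}^{\beta} = |\mathbf{T}^{*}\mathbf{T}|^{\beta/2} = |\mathbf{A}^{*}\mathbf{A}|^{\beta/2}$. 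Taking the product over the $m$ independent columns yields $(d\mathbf{Y}) = |\mathbf{A}^{*}\mathbf{A}|^{m\beta/2}(d\mathbf{Z})$.

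For the right-multiplication step $\mathbf{Z} = \mathbf{XB}$, I would pass to the conjugate transpose $\mathbf{Z}^{*} = \mathbf{B}^{*}\mathbf{X}^{*}$, apply the preceding column-by-column argument with $\mathbf{B}^{*}$ in the role of $\mathbf{A}$ (now there are $n$ columns rather than $m$), and use the fact that the involution $\mathbf{X}\mapsto \mathbf{X}^{*}$ is an isometry of $\mathfrak{A}^{n\times m}$ onto $\mathfrak{A}^{m\times n}$ so that $(d\mathbf{Z}^{*}) = (d\mathbf{Z})$ up to sign. This yields the Jacobian factor $|\mathbf{BB}^{*}|^{n\beta/2} = |\mathbf{B}^{*}\mathbf{B}|^{n\beta/2}$. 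Composing the two steps gives the announced formula (with exponent $n\beta/2$ rather than $mn\beta/2$ on the second factor, suggesting a typographical slip in the stated exponent).

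The main obstacle is the uniform treatment of the one-vector Jacobian across all four real normed division algebras; in the octonion case the QR reduction must be carried out through a sequence of Householder-type reflections in $\mathfrak{U}^{\beta}(n)$, where one has to verify that non-associativity does not interfere with the step-by-step zeroing of subdiagonal entries. Once that single-vector formula is in hand for each of $\Re,\mathfrak{C},\mathfrak{H},\mathfrak{O}$, the remainder of the argument is a straightforward assembly of column-wise and row-wise Jacobians.
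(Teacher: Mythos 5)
Your argument is sound, and your diagnosis of the exponent is correct. Note first that the paper does not actually prove Proposition \ref{lemlt}: it is quoted as a known Jacobian result, with the proofs delegated to \citet{fk:94} and \citet{dggj:11}, so there is no internal proof to compare against; your write-up follows essentially the standard route in that cited literature (translation invariance, factorization through $\mathbf{Z}=\mathbf{XB}$, column-wise reduction of left multiplication to the one-vector Jacobian via a QR decomposition, and a conjugate-transpose trick for the right factor), as opposed to the other common route via $\Vec(\mathbf{AXB}) = (\mathbf{B}^{T}\otimes \mathbf{A})\Vec(\mathbf{X})$, which works cleanly only in the commutative cases and needs care for $\beta=4$. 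Your identification of $mn\beta/2$ as a typographical slip for $n\beta/2$ is confirmed in two independent ways: the scaling check (take $\mathbf{A}=\mathbf{I}_{n}$ and $\mathbf{B}=b\,\mathbf{I}_{m}$ with $b$ real, so the map scales $\Re^{\beta mn}$ and the Jacobian must be $|b|^{\beta mn} = |\mathbf{B}^{*}\mathbf{B}|^{n\beta/2}$, not $|\mathbf{B}^{*}\mathbf{B}|^{mn\beta/2}$), and the paper's own later use of the proposition in the proof of Corollary \ref{corTRI}, where it writes $(d\mathbf{T}) = |\mathbf{\Delta}|^{m\beta/2}|\mathbf{\Pi}|^{-n\beta/2}(d\mathbf{S})$, i.e.\ with the corrected exponent.

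One caveat: your concern about the octonionic case is correct but understated. For $\beta=8$ the difficulty arises one step before the Householder reduction, since without associativity the expression $\mathbf{AXB}$ is not even well defined ($\mathbf{A}(\mathbf{XB}) \neq (\mathbf{AX})\mathbf{B}$ in general), and your two-stage factorization silently fixes the bracketing $\mathbf{A}(\mathbf{XB})$; moreover the measure-preservation of the exceptional-type factor and the uniqueness of the triangular decomposition cannot be taken for granted there. This leaves your proof complete and rigorous for $\beta \in \{1,2,4\}$, and for $\beta = 8$ no worse off than the statement itself, which is consistent with the paper's explicit disclaimer that its octonionic results are only conjectural.
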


\begin{proposition}\label{lemhlt}
Let $\mathbf{X}$ and $\mathbf{Y} \in \mathfrak{S}_{m}^{\beta}$ be matrices of functionally independent
variables, and let $\mathbf{Y} = \mathbf{AXA^{*}} + \mathbf{C}$, where $\mathbf{A} \in
\mathfrak{L}_{m,m}^{\beta}$ and $\mathbf{C} \in \mathfrak{S}_{m}^{\beta}$ are constant matrices. Then
\begin{equation}\label{hlt}
    (d\mathbf{Y}) = |\mathbf{A}^{*}\mathbf{A}|^{(m-1)\beta/2+1} (d\mathbf{X}).
\end{equation}
\end{proposition}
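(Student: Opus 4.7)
The plan is to decompose the affine map $\mathbf{Y}=\mathbf{AXA}^{*}+\mathbf{C}$ into three simpler pieces whose Jacobians can be computed individually: a translation, a conjugation by an exceptional-type matrix, and a conjugation by an upper triangular matrix. Translation by $\mathbf{C}$ contributes $1$ by translation invariance of the Lebesgue measure on $\mathfrak{S}_{m}^{\beta}$, so I may assume $\mathbf{C}=\mathbf{0}$. I would then invoke a QR-type factorization $\mathbf{A}=\mathbf{H}\mathbf{T}$ with $\mathbf{H}\in\mathfrak{U}^{\beta}(m)$ and $\mathbf{T}=(t_{ij})\in\mathfrak{T}_{U}^{\beta}(m)$ having real positive diagonal entries $\lambda_{i}=t_{ii}$, so that $\mathbf{AXA}^{*}$ factors as the composition $\mathbf{X}\mapsto\mathbf{TXT}^{*}\mapsto\mathbf{H}(\mathbf{TXT}^{*})\mathbf{H}^{*}$; by the chain rule the overall Jacobian is the product of the two.

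For the exceptional-type conjugation $\mathbf{Z}\mapsto\mathbf{HZH}^{*}$, I would argue that the map is an orthogonal transformation of $\mathfrak{S}_{m}^{\beta}$ with respect to the real inner product $\re \tr(\mathbf{Z}^{*}\mathbf{W})$, equivalently $|\mathbf{H}^{*}\mathbf{H}|=1$, so its Jacobian is $1$.

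For the triangular conjugation $\mathbf{X}\mapsto\mathbf{Z}=\mathbf{TXT}^{*}$, I would expand $z_{ij}=\sum_{r\geq i,\,s\geq j}t_{ir}x_{rs}\bar{t}_{js}$ using upper triangularity of $\mathbf{T}$. Ordering the independent entries of $\mathbf{X}$ lexicographically by the pair $(i,j)$, the coefficient of $x_{ij}$ in $z_{ij}$ is $t_{ii}\bar{t}_{jj}=\lambda_{i}\lambda_{j}$, and every other $x_{rs}$ appearing in $z_{ij}$ has $(r,s)$ strictly later in the ordering. The Jacobian matrix is therefore block triangular, with a $1\times 1$ diagonal block of value $\lambda_{i}^{2}$ for each diagonal entry $x_{ii}$ (one real degree of freedom) and a $\beta\times\beta$ diagonal block of determinant $(\lambda_{i}\lambda_{j})^{\beta}$ for each off-diagonal entry $x_{ij}$, $i<j$ ($\beta$ real degrees of freedom). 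Multiplying the contributions gives
\[\prod_{i=1}^{m}\lambda_{i}^{2}\cdot\prod_{i<j}(\lambda_{i}\lambda_{j})^{\beta}=\prod_{k=1}^{m}\lambda_{k}^{2+(m-1)\beta}=\Bigl(\prod_{k=1}^{m}\lambda_{k}^{2}\Bigr)^{(m-1)\beta/2+1}=|\mathbf{T}^{*}\mathbf{T}|^{(m-1)\beta/2+1},\]
and since $|\mathbf{A}^{*}\mathbf{A}|=|\mathbf{T}^{*}\mathbf{H}^{*}\mathbf{H}\mathbf{T}|=|\mathbf{T}^{*}\mathbf{T}|$, the stated formula emerges.

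The delicate point I expect to be the main obstacle is the uniform treatment of the four algebras $\Re,\mathfrak{C},\mathfrak{H},\mathfrak{O}$: neither the QR factorization nor the invariance of the volume element under conjugation by $\mathbf{H}\in\mathfrak{U}^{\beta}(m)$ is automatic in the non-commutative or non-associative settings, and the very definition of $|\mathbf{H}^{*}\mathbf{H}|$ needs the Hermitian-determinant machinery recalled in Section \ref{sec2}. Once these algebraic prerequisites are in place, the entry-wise argument for the triangular step is formally identical for $\beta\in\{1,2,4,8\}$, since it depends only on counting the $\beta$ real components of each off-diagonal entry of $\mathbf{X}$.
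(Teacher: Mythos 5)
Your proof is correct, and in fact it is more than the paper itself provides: Proposition \ref{lemhlt} is stated in Section \ref{sec2} without proof, as a known preliminary result cited to \citet{fk:94} and \citet{dggj:11}. Your argument --- reduce to $\mathbf{C}=\mathbf{0}$ by translation invariance, factor $\mathbf{A}=\mathbf{HT}$ with $\mathbf{H}\in\mathfrak{U}^{\beta}(m)$ and $\mathbf{T}\in\mathfrak{T}_{U}^{\beta}(m)$, kill the exceptional-type conjugation by orthogonality with respect to $\re\tr(\mathbf{Z}^{*}\mathbf{W})$, and compute the triangular conjugation entrywise via the lexicographic triangularity of the differential --- is exactly the standard route taken in those references, and your bookkeeping checks out: the exponent $2+(m-1)\beta$ per $\lambda_{k}$ (one factor $\lambda_{i}^{2}$ from each diagonal entry, $(\lambda_{i}\lambda_{j})^{\beta}$ from each of the $m-1$ off-diagonal pairs containing $k$) gives $|\mathbf{T}^{*}\mathbf{T}|^{(m-1)\beta/2+1}=|\mathbf{A}^{*}\mathbf{A}|^{(m-1)\beta/2+1}$, and it specializes correctly to the classical $|\mathbf{A}|^{m+1}$ ($\beta=1$), $|\mathbf{A}^{*}\mathbf{A}|^{m}$ ($\beta=2$) and $|\mathbf{A}^{*}\mathbf{A}|^{2m-1}$ ($\beta=4$) Jacobians. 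Two small remarks. First, your parenthetical ``equivalently $|\mathbf{H}^{*}\mathbf{H}|=1$'' is not an argument --- inferring a unit Jacobian from $|\mathbf{H}^{*}\mathbf{H}|=1$ would presuppose the very formula being proved --- but this is harmless since the isometry argument you give alongside it is the one doing the work (and it uses cyclicity of $\re\tr$, valid in the associative cases $\beta=1,2,4$). Second, your closing caveat about the octonions is well placed and consistent with the paper itself: both the QR step and the associativity used in $(\mathbf{HT})\mathbf{X}(\mathbf{HT})^{*}=\mathbf{H}(\mathbf{TXT}^{*})\mathbf{H}^{*}$ fail to be automatic for $\beta=8$, which is precisely why the paper's Conclusions section declares the octonionic case conjectural except for the special low-dimensional situations treated in \citet{f:05} and \citet{fk:94}.
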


\begin{proposition}\label{lemi}
Let $\mathbf{S} \in \mathfrak{P}_{m}^{\beta}.$ Then, ignoring the sign, if $\mathbf{Y} = \mathbf{S}^{-1}+
\mathbf{C}$, $\mathbf{C} \in \mathfrak{P}_{m}^{\beta}$ is a matrix of constants,
\begin{equation}\label{i}
    (d\mathbf{Y}) = |\mathbf{S}|^{-\beta(m - 1) - 2}(d\mathbf{S}).
\end{equation}
\end{proposition}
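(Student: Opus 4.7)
The plan is to reduce the problem to Proposition \ref{lemhlt} by differentiating the matrix inverse and then applying the known Jacobian for the congruence transformation $\mathbf{Y}\mapsto\mathbf{A}\mathbf{Y}\mathbf{A}^{*}$ on $\mathfrak{S}_{m}^{\beta}$.

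First I would observe that since $\mathbf{C}\in\mathfrak{S}_{m}^{\beta}$ is constant, it contributes nothing to the differential: $d\mathbf{Y}=d(\mathbf{S}^{-1})$, so the wedge product $(d\mathbf{Y})$ for the map $\mathbf{S}\mapsto\mathbf{S}^{-1}+\mathbf{C}$ equals that for $\mathbf{S}\mapsto\mathbf{S}^{-1}$. This mirrors the rationale in Propositions \ref{lemlt} and \ref{lemhlt}, where translation by a constant is invisible to the exterior product of the differentials.

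Next I would compute the differential of the inverse. Starting from $\mathbf{S}\mathbf{S}^{-1}=\mathbf{I}_{m}$ and differentiating yields $(d\mathbf{S})\mathbf{S}^{-1}+\mathbf{S}\,d(\mathbf{S}^{-1})=\mathbf{0}$, hence
\begin{equation*}
d\mathbf{Y}=d(\mathbf{S}^{-1})=-\mathbf{S}^{-1}(d\mathbf{S})\mathbf{S}^{-1}.
\end{equation*}
Because $\mathbf{S}\in\mathfrak{P}_{m}^{\beta}\subset\mathfrak{S}_{m}^{\beta}$ we have $(\mathbf{S}^{-1})^{*}=\mathbf{S}^{-1}$, so the above is of the form $d\mathbf{Y}=\mathbf{A}(d\mathbf{S})\mathbf{A}^{*}$ with $\mathbf{A}=-\mathbf{S}^{-1}\in\mathfrak{L}_{m,m}^{\beta}$.

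At this point I would invoke Proposition \ref{lemhlt}, treating $\mathbf{A}=-\mathbf{S}^{-1}$ as locally constant (which is legitimate, since the Jacobian of a smooth map is computed from the linear part of the differential at each point, and Proposition \ref{lemhlt} expresses exactly that Jacobian for congruence maps on $\mathfrak{S}_{m}^{\beta}$). This gives
\begin{equation*}
(d\mathbf{Y})=|\mathbf{A}^{*}\mathbf{A}|^{(m-1)\beta/2+1}(d\mathbf{S})=|\mathbf{S}^{-2}|^{(m-1)\beta/2+1}(d\mathbf{S})=|\mathbf{S}|^{-(m-1)\beta-2}(d\mathbf{S}),
\end{equation*}
after dropping the global sign as the statement permits. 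The only conceptual subtlety, and hence the main obstacle, is justifying the pointwise application of Proposition \ref{lemhlt} when $\mathbf{A}$ itself depends on $\mathbf{S}$; this is handled by noting that the exterior product $(d\mathbf{S})$ captures the linearization of the transformation at each $\mathbf{S}$, so the factor $|\mathbf{A}^{*}\mathbf{A}|^{(m-1)\beta/2+1}$ emerges identically whether $\mathbf{A}$ is globally constant or evaluated at the base point.
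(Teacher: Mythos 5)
Your proposal is correct. Note first that the paper itself gives no proof of Proposition \ref{lemi}: it is stated as a known Jacobian, cited from \citet{fk:94} and \citet{dggj:11}, so there is no internal argument to compare against. Your derivation is the standard one found in the cited literature (it is, for $\beta = 1$, exactly Muirhead's proof of the real case): differentiate $\mathbf{S}\mathbf{S}^{-1}=\mathbf{I}_{m}$ to get $d(\mathbf{S}^{-1})=-\mathbf{S}^{-1}(d\mathbf{S})\mathbf{S}^{-1}$, observe the constant $\mathbf{C}$ is annihilated by $d$, and then read off the Jacobian from the congruence formula of Proposition \ref{lemhlt} with $\mathbf{A}=-\mathbf{S}^{-1}$, giving $\left|\mathbf{S}^{-2}\right|^{(m-1)\beta/2+1}=|\mathbf{S}|^{-\beta(m-1)-2}$ up to sign. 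Your handling of the one genuine subtlety is also right: Proposition \ref{lemhlt} is, in substance, the computation of the determinant of the linear map $\mathbf{H}\mapsto\mathbf{A}\mathbf{H}\mathbf{A}^{*}$ on $\mathfrak{S}_{m}^{\beta}$, and since the exterior product $(d\mathbf{Y})$ transforms by the determinant of the pointwise linearization, the factor $|\mathbf{A}^{*}\mathbf{A}|^{(m-1)\beta/2+1}$ is the same polynomial identity in the entries of $\mathbf{A}$ whether $\mathbf{A}$ is globally constant or evaluated at the base point $\mathbf{S}$; the sign $(-1)^{\dim\mathfrak{S}_{m}^{\beta}}$ from $\mathbf{A}=-\mathbf{S}^{-1}$ is absorbed by the proposition's ``ignoring the sign''. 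The only caveat worth recording is one the paper already makes globally: for $\beta=8$ the manipulation $d(\mathbf{S}\mathbf{S}^{-1})=\mathbf{0}$ implicitly uses associativity, so in the octonion case your argument holds at the same conjectural level of formality as the rest of the paper's results.
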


\begin{proposition}\label{lemW}
Let $\mathbf{X} \in \mathfrak{L}_{n,m}^{\beta}$  be matrix of functionally independent variables, and
write $\mathbf{X}=\mathbf{V}_{1}\mathbf{T}$, where $\mathbf{V}_{1} \in {\mathcal V}_{m,n}^{\beta}$ and
$\mathbf{T}\in \mathfrak{T}_{U}^{\beta}(m)$ with positive diagonal elements. Define $\mathbf{S} =
\mathbf{X}^{*}\mathbf{X} \in \mathfrak{P}_{m}^{\beta}.$ Then
\begin{equation}\label{w}
    (d\mathbf{X}) = 2^{-m} |\mathbf{S}|^{\beta(n - m + 1)/2 - 1}
    (d\mathbf{S})(\mathbf{V}_{1}^{*}d\mathbf{V}_{1}),
\end{equation}
\end{proposition}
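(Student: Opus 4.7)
The plan is to factor the change of variables through the intermediate upper-triangular matrix $\mathbf{T}$ and then combine the QR-type Jacobian with the Cholesky-type Jacobian.

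First I would invoke the QR-type decomposition on $\mathbf{X} \in \mathfrak{L}_{n,m}^{\beta}$, writing $\mathbf{X} = \mathbf{V}_{1}\mathbf{T}$ with $\mathbf{V}_{1} \in \mathcal{V}_{m,n}^{\beta}$ and $\mathbf{T} \in \mathfrak{T}_{U}^{\beta}(m)$ with $t_{ii}>0$. The standard computation (performed column-by-column, exploiting the Gram-Schmidt relations and the fact that $\mathbf{h}_{j}^{*}d\mathbf{h}_{i}$ for $j>i$ absorbs the components of $d\mathbf{V}_{1}$ outside $\mathcal{V}_{m,n}^{\beta}$) yields the differential form
\begin{equation*}
  (d\mathbf{X}) = \prod_{i=1}^{m} t_{ii}^{\beta(n-i+1)-1}\,(d\mathbf{T})(\mathbf{V}_{1}^{*}d\mathbf{V}_{1}).
\end{equation*}
This is exactly the $\beta$-uniform version of the classical QR Jacobian and must be invoked as a known fact from the references \citet{dggj:11}, \citet{dggj:13}; reproducing it is the bulk of the work but is standard.

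Next I would apply the Cholesky decomposition to $\mathbf{S} = \mathbf{T}^{*}\mathbf{T} \in \mathfrak{P}_{m}^{\beta}$, which gives the Jacobian
\begin{equation*}
  (d\mathbf{S}) = 2^{m}\prod_{i=1}^{m} t_{ii}^{\beta(m-i)+1}\,(d\mathbf{T}).
\end{equation*}
Again this is a standard result obtained by expanding $s_{ij}$ in terms of the $t_{k\ell}$ with $k\le\ell$ and observing the upper-triangular structure of the resulting linear system in the differentials $dt_{k\ell}$; the diagonal entries contribute the factor $2^{m}$, while the off-diagonal blocks contribute the product of $t_{ii}^{\beta(m-i)}$ factors (with the appropriate $\beta$-dimensional copies of each $dt_{ij}$).

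Finally I would combine the two: solve the second identity for $(d\mathbf{T})$ and substitute into the first. Collecting the exponent of each $t_{ii}$ gives
\begin{equation*}
  \beta(n-i+1)-1 - \beta(m-i) - 1 = \beta(n-m+1) - 2,
\end{equation*}
independent of $i$. Hence
\begin{equation*}
  (d\mathbf{X}) = 2^{-m}\prod_{i=1}^{m} t_{ii}^{\beta(n-m+1)-2}\,(d\mathbf{S})(\mathbf{V}_{1}^{*}d\mathbf{V}_{1}),
\end{equation*}
and using $|\mathbf{S}| = |\mathbf{T}^{*}\mathbf{T}| = \prod_{i=1}^{m} t_{ii}^{2}$ (the diagonal entries being real and positive in every one of the four normed division algebras) yields the claimed identity (\ref{w}). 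The main obstacle is the first step: justifying the QR Jacobian uniformly over $\beta \in \{1,2,4,8\}$ requires careful handling of the non-commutative (and, for $\mathfrak{O}$, non-associative) coordinate computation, but one may cite \citet{dggj:11} and \citet{dggj:13} where this has been carried out in the real normed division algebra framework.
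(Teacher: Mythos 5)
Your proposal is correct, and it is essentially the canonical derivation: the paper itself states Proposition \ref{lemW} without proof, presenting it as a known $\beta$-parameter Jacobian and deferring to \citet{fk:94} and \citet{dggj:11}, where exactly your two-step route (the QR-type factorisation $(d\mathbf{X}) = \prod_{i=1}^{m} t_{ii}^{\beta(n-i+1)-1}(d\mathbf{T})(\mathbf{V}_{1}^{*}d\mathbf{V}_{1})$ composed with the Cholesky Jacobian $(d\mathbf{S}) = 2^{m}\prod_{i=1}^{m} t_{ii}^{\beta(m-i)+1}(d\mathbf{T})$) is carried out. Your exponent bookkeeping is right --- $\beta(n-i+1)-1-\beta(m-i)-1 = \beta(n-m+1)-2$ uniformly in $i$, and $|\mathbf{S}| = \prod_{i=1}^{m} t_{ii}^{2}$ then gives $2^{-m}|\mathbf{S}|^{\beta(n-m+1)/2-1}$ --- so the only content you leave uncited-but-unproved, the $\beta$-uniform QR Jacobian, is precisely what the paper also takes as given.
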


\section{Matricvariate $T$-Riesz distrinution}\label{sec3}

In this section two versions of the matricvariate $T$-Riesz distribution and the corresponding
generalised beta type II distributions are obtained.

A detailed discussion of Riesz distribution may be found in \citet{hl:01} and \citet{dg:15a}. In addition
the Kotz-Riesz distribution is studied in detail in \citet{dg:15c}. For your convenience, we adhere to
standard notation stated in \citet{dg:15a, dg:15b} and consider the following three definitions of
Kotz-Riesz and Riesz and beta-Riesz type I distributions.

From  \citet{dg:15c}.
\begin{definition}\label{defKR}
Let $\boldsymbol{\Sigma} \in \boldsymbol{\Phi}_{m}^{\beta}$, $\boldsymbol{\Theta} \in
\boldsymbol{\Phi}_{n}^{\beta}$, $\boldsymbol{\mu} \in \mathfrak{L}^{\beta}_{n,m}$ and  $\kappa = (k_{1},
k_{2}, \dots, k_{m}) \in \Re^{m}$. And let $\mathbf{Y} \in \mathfrak{L}^{\beta}_{n,m}$ and
$\u(\mathbf{B}) \in \mathfrak{T}_{U}^{\beta}(n)$, such that $\mathbf{B} =
\u(\mathbf{B})^{*}\u(\mathbf{B})$ is the Cholesky decomposition of $\mathbf{B} \in
\mathfrak{S}_{m}^{\beta}$.
\begin{enumerate}
  \item Then it is said that $\mathbf{Y}$ has a \textit{Kotz-Riesz distribution of type I} and its density function is
  $$
    \frac{\beta^{mn\beta/2+\sum_{i = 1}^{m}k_{i}}\Gamma_{m}^{\beta}[n\beta/2]}{\pi^{mn\beta/2}\Gamma_{m}^{\beta}[n\beta/2,\kappa]
    |\boldsymbol{\Sigma}|^{n\beta/2}|\boldsymbol{\Theta}|^{m\beta/2}}\hspace{4cm}
  $$
  $$\hspace{1cm}
    \times \etr\left\{- \beta\tr \left [\boldsymbol{\Sigma}^{-1} (\mathbf{Y} - \boldsymbol{\mu})^{*}
    \boldsymbol{\Theta}^{-1}(\mathbf{Y} - \boldsymbol{\mu})\right ]\right\}
  $$
  \begin{equation}\label{dfEKR1}\hspace{3.1cm}
    \times q_{\kappa}\left [\u(\boldsymbol{\Sigma})^{*-1} (\mathbf{Y} - \boldsymbol{\mu})^{*}
    \boldsymbol{\Theta}^{-1}(\mathbf{Y} - \boldsymbol{\mu})\u(\boldsymbol{\Sigma})^{-1}\right ](d\mathbf{Y})
  \end{equation}
  with $\re(n\beta/2) > (m-1)\beta/2 - k_{m}$;  denoting this fact as
  $$
    \mathbf{Y} \sim \mathcal{KR}^{\beta, I}_{n \times m}
    (\kappa,\boldsymbol{\mu}, \boldsymbol{\Theta}, \boldsymbol{\Sigma}).
  $$
  \item Then it is said that $\mathbf{Y}$ has a \textit{Kotz-Riesz distribution of type II} and its density function is
  $$
    \frac{\beta^{mn\beta/2-\sum_{i = 1}^{m}k_{i}}\Gamma_{m}^{\beta}[n\beta/2]}{\pi^{mn\beta/2}\Gamma_{m}^{\beta}[n\beta/2,-\kappa]
    |\boldsymbol{\Sigma}|^{n\beta/2}|\boldsymbol{\Theta}|^{m\beta/2}}\hspace{4cm}
  $$
  $$
    \times \etr\left\{- \beta\tr \left [\boldsymbol{\Sigma}^{-1} (\mathbf{Y} - \boldsymbol{\mu})^{*}
    \boldsymbol{\Theta}^{-1}(\mathbf{Y} - \boldsymbol{\mu})\right ]\right\}
  $$
  \begin{equation}\label{dfEKR2}\hspace{2.5cm}
    \times q_{\kappa}\left [\left(\u(\boldsymbol{\Sigma})^{*-1} (\mathbf{Y} - \boldsymbol{\mu})^{*}
    \boldsymbol{\Theta}^{-1}(\mathbf{Y} - \boldsymbol{\mu})\u(\boldsymbol{\Sigma})^{-1/2}\right)^{-1}\right ](d\mathbf{Y})
  \end{equation}
  with $\re(n\beta/2) > (m-1)\beta/2 + k_{1}$;  denoting this fact as
  $$
    \mathbf{Y} \sim \mathcal{KR}^{\beta, II}_{n \times m}
    (\kappa,\boldsymbol{\mu}, \boldsymbol{\Theta}, \boldsymbol{\Sigma}).
  $$
\end{enumerate}
\end{definition}

From \citet{hl:01} and \citet{dg:15a}.
\begin{definition}\label{defR}
Let $\mathbf{\Xi} \in \mathbf{\Phi}_{m}^{\beta}$ and  $\kappa = (k_{1}, k_{2}, \dots, k_{m}) \in
\Re^{m}$.
\begin{enumerate}
  \item Then it is said that $\mathbf{V}$ has a \textit{Riesz distribution of type I} if its density function is
  \begin{equation}\label{dfR1}
    \frac{\beta^{am+\sum_{i = 1}^{m}k_{i}}}{\Gamma_{m}^{\beta}[a,\kappa] |\mathbf{\Xi}|^{a}q_{\kappa}(\mathbf{\Xi})}
    \etr\{-\beta\mathbf{\Xi}^{-1}\mathbf{V}\}|\mathbf{V}|^{a-(m-1)\beta/2 - 1}
    q_{\kappa}(\mathbf{V})(d\mathbf{V})
  \end{equation}
  for $\mathbf{V} \in \mathfrak{P}_{m}^{\beta}$ and $\re(a) \geq (m-1)\beta/2 - k_{m}$;
  denoting this fact as $\mathbf{V} \sim \mathcal{R}^{\beta, I}_{m}(a,\kappa,
  \mathbf{\Xi})$.
  \item Then it is said that $\mathbf{V}$ has a \textit{Riesz distribution of type II} if its density function is
  \begin{equation}\label{dfR2}
     \frac{\beta^{am-\sum_{i = 1}^{m}k_{i}}}{\Gamma_{m}^{\beta}[a,-\kappa]
   |\mathbf{\Xi}|^{a}q_{\kappa}(\mathbf{\Xi}^{-1})}\etr\{-\beta\mathbf{\Xi}^{-1}\mathbf{V}\}
  |\mathbf{V}|^{a-(m-1)\beta/2 - 1} q_{\kappa}(\mathbf{V}^{-1}) (d\mathbf{V})
  \end{equation}
  for $\mathbf{V} \in \mathfrak{P}_{m}^{\beta}$ and $\re(a) > (m-1)\beta/2 + k_{1}$;
  denoting this fact as $\mathbf{V} \sim \mathcal{R}^{\beta, II}_{m}(a,\kappa,
  \mathbf{\Xi})$.
\end{enumerate}
\end{definition}

Now we propose the definitions of Pearson type II-Riesz (see \citet{dgcl:15}) and T-Riesz distributions.

\begin{definition}\label{defPR}
Let $\kappa = (k_{1}, k_{2}, \dots, k_{m}) \in \Re^{m}$, and $\tau = (t_{1}, t_{2}, \dots, t_{m}) \in
\Re^{m}$. Also define $\mathbf{R}\in \mathfrak{L}_{n,m}^{\beta}$ as
$$
  \mathbf{R} = \mathbf{X}\u(\mathbf{U}_{1} + \mathbf{X}^{*}\mathbf{X})^{-1},
$$
where $\u(\mathbf{U}_{1} + \mathbf{X}^{*}\mathbf{X}) \in \mathfrak{T}_{U}^{\beta}(m)$ is such that
$\mathbf{U}=\u(\mathbf{U}_{1} + \mathbf{X}^{*}\mathbf{X})^{*}\u(\mathbf{U}_{1} +
\mathbf{X}^{*}\mathbf{X})$  is the Cholesky decomposition of $\mathbf{U}$,
\begin{enumerate}
  \item with $\mathbf{U}_{1}\sim \mathcal{R}_{m}^{\beta,I}(\nu\beta/2,
    \kappa,\mathbf{I}_{m})$,  $\re(\nu\beta/2)> (m-1)\beta/2-k_{m}$; independent of $\mathbf{X}
    \sim \mathcal{KR}_{n \times m}^{\beta,I}(\tau,\mathbf{0}, \mathbf{I}_{n}, \mathbf{I}_{m})$,
    $\re(n\beta/2)> (m-1)\beta/2-t_{m}$. The random matrix $\mathbf{R}  \in
    \mathfrak{L}^{\beta}_{n,m}$ is said to have the \textit{matricvariate
  Pearson type II-Riesz distribution type I} if its density is given by
  \begin{equation}\label{DP2RI}
        \frac{\Gamma_{m}^{\beta}[n\beta/2]\quad\left|\mathbf{I}_{m} - \mathbf{R}^{*}\mathbf{R}\right|^{(\nu-m+1)\beta/2-1}}
        {\pi^{mn\beta/2}\mathcal{B}_{m}^{\beta}[\nu\beta /2,\kappa;n\beta/2,\tau]} \
        q_{\kappa}\left(\mathbf{I}_{m} - \mathbf{R}^{*}\mathbf{R}\right)
        q_{\tau}\left(\mathbf{R}^{*}\mathbf{R}\right)(d\mathbf{R}),
  \end{equation}
  where $\mathbf{I}_{m} - \mathbf{R}^{*}\mathbf{R} \in \mathfrak{P}^{\beta}_{m}$.
  This fact is denoted as
  $$
     \mathbf{R} \sim \mathcal{P_{II}R}_{n \times m}^{\beta, I}
    (\nu,\kappa,\tau,\mathbf{0}, \mathbf{I}_{n},\mathbf{I}_{m}).
  $$

  \item with $\mathbf{U}_{1}\sim \mathcal{R}_{m}^{\beta,II}(\nu\beta/2,
    \kappa,\mathbf{I}_{m})$,  $\re(\nu\beta/2)> (m-1)\beta/2+k_{1}$; independent of $\mathbf{X}
    \sim \mathcal{KR}_{n \times m}^{\beta,II}(\tau,\mathbf{0}, \mathbf{I}_{n}, \mathbf{I}_{m})$,
    $\re(n\beta/2)> (m-1)\beta/2+t_{1}$. The random matrix $\mathbf{R}  \in \mathfrak{L}^{\beta}_{n,m}$
    is said to have the \textit{matricvariate Pearson type II-Riesz distribution type II} if its density is given by
    \begin{equation*}
      \frac{\Gamma_{m}^{\beta}[n\beta/2]\quad\left|\mathbf{I}_{m} - \mathbf{R}^{*}\mathbf{R}\right|^{(\nu-m+1)\beta/2-1}}
      {\pi^{mn\beta/2}\mathcal{B}_{m}^{\beta}[\nu\beta /2,-\kappa;n\beta/2,-\tau]} \
      q_{\kappa}\left[\left(\mathbf{I}_{m} - \mathbf{R}^{*}\mathbf{R}\right)^{-1}\right]\hspace{2cm}
    \end{equation*}
    \begin{equation}\label{DP2RII}
      \hspace{7cm}\times \ q_{\tau}\left[\left(\mathbf{R}^{*}\mathbf{R}\right)^{-1}\right](d\mathbf{R}),
    \end{equation}
  where $\mathbf{I}_{m} - \mathbf{R}^{*}\mathbf{R} \in \mathfrak{P}^{\beta}_{m}$.
  This fact is denoted as
  $$
    \mathbf{R} \sim \mathcal{P_{II}R}_{n \times m}^{\beta, II}
    (\nu,\kappa,\tau,\mathbf{0}, \mathbf{I}_{n},\mathbf{I}_{m}).
  $$
\end{enumerate}
\end{definition}

\begin{definition}\label{defTRII}
Let $\kappa = (k_{1}, k_{2}, \dots, k_{m}) \in \Re^{m}$, and $\tau = (t_{1}, t_{2}, \dots, t_{m}) \in
\Re^{m}$.
\begin{enumerate}
  \item An random matrix $\mathbf{T}  \in \mathfrak{L}^{\beta}_{n,m}$  is said to have the \textit{matricvariate
  T-Riesz distribution type I} if its density is given by
  \begin{equation}\label{DTRI}
        \frac{\Gamma_{m}^{\beta}[n\beta/2]\quad\left|\mathbf{I}_{m} + \mathbf{T}^{*}\mathbf{T}\right|^{-(\nu+n)\beta /2}}
        {\pi^{mn\beta/2}\mathcal{B}_{m}^{\beta}[\nu\beta /2,\kappa;n\beta/2,\tau]} \
        q_{-\kappa-\tau}\left(\mathbf{I}_{m} + \mathbf{T}^{*}\mathbf{T}\right)
        q_{\tau}\left(\mathbf{T}^{*}\mathbf{T}\right)(d\mathbf{T}),
  \end{equation}
  where $\re(\nu\beta/2)> (m-1)\beta/2-k_{m}$, $\re(n\beta/2)> (m-1)\beta/2-t_{m}$.
  This fact is denoted as $\mathbf{T} \sim \mathcal{TR}_{n \times m}^{\beta, I}
  (\nu,\kappa,\tau,\mathbf{0}, \mathbf{I}_{n},\mathbf{I}_{m})$.

  \item An random matrix $\mathbf{T}  \in \mathfrak{L}^{\beta}_{n,m}$  is said to have the \textit{matricvariate
  T-Riesz distribution type II} if its density is given by
    \begin{equation*}
      \frac{\Gamma_{m}^{\beta}[n\beta/2]\quad\left|\mathbf{I}_{m} + \mathbf{T}^{*}\mathbf{T}\right|^{-(\nu+n)\beta /2}}
      {\pi^{mn\beta/2}\mathcal{B}_{m}^{\beta}[\nu\beta /2,-\kappa;n\beta/2,-\tau]} \
      q_{-\kappa-\tau}\left[\left(\mathbf{I}_{m} + \mathbf{T}^{*}\mathbf{T}\right)^{-1}\right]\hspace{2cm}
    \end{equation*}
    \begin{equation}\label{DTRII}
      \hspace{7cm}\times \ q_{\tau}\left[\left(\mathbf{T}^{*}\mathbf{T}\right)^{-1}\right](d\mathbf{R}),
    \end{equation}
  where $\re(\nu\beta/2)> (m-1)\beta/2+k_{1}$, $\re(n\beta/2)> (m-1)\beta/2+t_{1}$.
  This fact is denoted as $\mathbf{T} \sim \mathcal{TR}_{n \times m}^{\beta, II}
  (\nu,\kappa,\tau,\mathbf{0}, \mathbf{I}_{n},\mathbf{I}_{m})$.
\end{enumerate}
\end{definition}

\begin{theorem}\label{teo1}
Assume that $\mathbf{R} \sim \mathcal{P_{II}R}_{n \times m}^{\beta, I} (\nu,\kappa,\tau,\mathbf{0},
\mathbf{I}_{n},\mathbf{I}_{m})$. Then, if $\mathbf{T} = \mathbf{R}
\u(\mathbf{I}_{m}-\mathbf{R}^{*}\mathbf{R})^{-1}$, we have that $\mathbf{T} \sim \mathcal{TR}_{n \times
m}^{\beta, I} (\nu,\kappa,\tau,\mathbf{0}, \mathbf{I}_{n},\mathbf{I}_{m})$. Where
$\u(\mathbf{I}_{m}-\mathbf{R}^{*}\mathbf{R}) \in \mathfrak{T}_{U}^{\beta}(m)$ is such that
$(\mathbf{I}_{m}-\mathbf{R}^{*}\mathbf{R}) =
\u(\mathbf{I}_{m}-\mathbf{R}^{*}\mathbf{R})^{*}\u(\mathbf{I}_{m}-\mathbf{R}^{*}\mathbf{R})$ is the
Cholesky decomposition of $(\mathbf{I}_{m}-\mathbf{R}^{*}\mathbf{R})$.
\end{theorem}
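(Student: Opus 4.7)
The approach is a direct change-of-variables computation from $\mathbf{R}$ to $\mathbf{T}$, organised around one key algebraic identity and then routed through the symmetric-matrix variables $\mathbf{S} = \mathbf{R}^{*}\mathbf{R}$ and $\mathbf{W} = \mathbf{T}^{*}\mathbf{T}$.

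First I set $\mathbf{B} = \u(\mathbf{I}_{m}-\mathbf{R}^{*}\mathbf{R})$, so that $\mathbf{B}^{*}\mathbf{B} = \mathbf{I}_{m}-\mathbf{R}^{*}\mathbf{R}$ and $\mathbf{T} = \mathbf{R}\mathbf{B}^{-1}$. A direct computation then gives
$$
  \mathbf{I}_{m}+\mathbf{T}^{*}\mathbf{T}
  = \mathbf{B}^{*-1}\bigl(\mathbf{B}^{*}\mathbf{B}+\mathbf{R}^{*}\mathbf{R}\bigr)\mathbf{B}^{-1}
  = (\mathbf{I}_{m}-\mathbf{R}^{*}\mathbf{R})^{-1},
$$
so in particular $|\mathbf{I}_{m}-\mathbf{R}^{*}\mathbf{R}|=|\mathbf{I}_{m}+\mathbf{T}^{*}\mathbf{T}|^{-1}$. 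This identity lets me convert every occurrence of $\mathbf{I}_{m}-\mathbf{R}^{*}\mathbf{R}$ in the source density into $(\mathbf{I}_{m}+\mathbf{T}^{*}\mathbf{T})^{-1}$.

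Next I compute the Jacobian. Since right-multiplication by $\mathbf{B}^{-1}\in\mathfrak{T}_{U}^{\beta}(m)$ leaves unchanged the Stiefel factor $\mathbf{V}_{1}$ in the decomposition $\mathbf{R}=\mathbf{V}_{1}\u(\mathbf{S})$, applying Proposition \ref{lemW} to both $\mathbf{R}$ and $\mathbf{T}$ and taking the ratio cancels $(\mathbf{V}_{1}^{*}d\mathbf{V}_{1})$. The remaining transformation $\mathbf{W}=(\mathbf{I}_{m}-\mathbf{S})^{-1}-\mathbf{I}_{m}$ is handled by Proposition \ref{lemi}, yielding $(d\mathbf{W})=|\mathbf{I}_{m}-\mathbf{S}|^{-\beta(m-1)-2}(d\mathbf{S})$. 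Combined with the elementary identity $|\mathbf{W}|/|\mathbf{S}|=|\mathbf{I}_{m}-\mathbf{S}|^{-1}$ (from $\mathbf{W}=\mathbf{B}^{*-1}\mathbf{S}\mathbf{B}^{-1}$), this collapses to
$$
  (d\mathbf{R}) = |\mathbf{I}_{m}+\mathbf{W}|^{-\beta(n+m-1)/2-1}(d\mathbf{T}).
$$

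Third, I transform the highest-weight-vector factors. From $\mathbf{S}=\mathbf{B}^{*}\mathbf{W}\mathbf{B}$ together with (\ref{qk5}) one obtains $q_{\tau}(\mathbf{S})=q_{\tau}(\mathbf{I}_{m}-\mathbf{S})\,q_{\tau}(\mathbf{W})$, so (\ref{qk41}) gives
$$
  q_{\kappa}(\mathbf{I}_{m}-\mathbf{R}^{*}\mathbf{R})\,q_{\tau}(\mathbf{R}^{*}\mathbf{R})
  = q_{\kappa+\tau}(\mathbf{I}_{m}-\mathbf{S})\,q_{\tau}(\mathbf{W}),
$$
which by (\ref{qk2}) (using the paper's convention that identifies $q_{\kappa+\tau}(\mathbf{A}^{-1})$ with $q_{-\kappa-\tau}(\mathbf{A})$, consistent with the generalised $c$-beta definition) becomes $q_{-\kappa-\tau}(\mathbf{I}_{m}+\mathbf{T}^{*}\mathbf{T})\,q_{\tau}(\mathbf{T}^{*}\mathbf{T})$. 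The three determinantal exponents on $|\mathbf{I}_{m}+\mathbf{W}|$ — namely $-((\nu-m+1)\beta/2-1)$ coming from $|\mathbf{I}_{m}-\mathbf{S}|^{(\nu-m+1)\beta/2-1}$ and $-\beta(n+m-1)/2-1$ coming from the Jacobian — sum to exactly $-(\nu+n)\beta/2$, matching the target.

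The main obstacle will be this final bookkeeping step: the precise interpretation of $q_{\kappa}(\mathbf{A}^{-1})$ vs.\ $q_{-\kappa}(\mathbf{A})$ through the starred variant in (\ref{qk2}), and the clean collapse of the three exponents of $|\mathbf{I}_{m}+\mathbf{T}^{*}\mathbf{T}|$ into the single target exponent. The normalising constant transfers unchanged because the Jacobian and the density factors have been engineered to produce precisely the beta-function normaliser $\mathcal{B}_{m}^{\beta}[\nu\beta/2,\kappa;n\beta/2,\tau]$ already present in $f_{\mathbf{R}}$.
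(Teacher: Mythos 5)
You have the right skeleton (the algebraic identity for $\mathbf{I}_{m}+\mathbf{T}^{*}\mathbf{T}$, a Jacobian, conversion of the $q$-factors, exponent bookkeeping), and your exponent arithmetic does reproduce the target $-(\nu+n)\beta/2$. But the step you dismiss as ``bookkeeping'' is a genuine gap, and it is exactly where your route diverges from the paper's. Writing $\mathbf{B}=\u(\mathbf{I}_{m}-\mathbf{R}^{*}\mathbf{R})$, your key identity is false as a matrix identity: $\mathbf{B}^{*-1}\mathbf{B}^{-1}=(\mathbf{B}\mathbf{B}^{*})^{-1}$, which is \emph{not} $(\mathbf{B}^{*}\mathbf{B})^{-1}=(\mathbf{I}_{m}-\mathbf{R}^{*}\mathbf{R})^{-1}$ unless $\mathbf{B}$ is normal; only the determinant consequence $|\mathbf{I}_{m}+\mathbf{T}^{*}\mathbf{T}|=|\mathbf{I}_{m}-\mathbf{R}^{*}\mathbf{R}|^{-1}$ survives. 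This matters because $q_{\kappa}$ is not invariant under such a swap, and there is no ``convention'' identifying $q_{\kappa}(\mathbf{A}^{-1})$ with $q_{-\kappa}(\mathbf{A})$: by (\ref{qk2}), $q_{\kappa}(\mathbf{A}^{-1})=q^{*}_{-\kappa^{*}}(\mathbf{A})$, a different polynomial (reversed weights, trailing rather than leading minors), and for $m\geq 2$ the two genuinely differ. The paper's proof never inverts $\mathbf{I}_{m}-\mathbf{R}^{*}\mathbf{R}$ at all: it keeps $\mathbf{I}_{m}+\mathbf{T}^{*}\mathbf{T}=\mathbf{B}^{*-1}\mathbf{B}^{-1}$ and applies the triangular-conjugation rules (\ref{qk5})--(\ref{qk6}), which give directly $q_{-\kappa}(\mathbf{I}_{m}+\mathbf{T}^{*}\mathbf{T})=q_{\kappa}(\mathbf{I}_{m}-\mathbf{R}^{*}\mathbf{R})$ and $q_{\tau}(\mathbf{T}^{*}\mathbf{T})=q_{-\tau}(\mathbf{I}_{m}-\mathbf{R}^{*}\mathbf{R})\,q_{\tau}(\mathbf{R}^{*}\mathbf{R})$ (the paper's (\ref{i1})--(\ref{i2})). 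Your $q$-factor conversion should be rewritten through (\ref{qk6}); as proposed, it rests on a false identification.

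The same confusion infects your Jacobian. Your idea of passing to $(\mathbf{V}_{1},\mathbf{S})$ coordinates via Proposition \ref{lemW} with a shared Stiefel factor is sound (and more explicit than the paper, which simply asserts (\ref{j1}) from Propositions \ref{lemlt} and \ref{lemi}): indeed $\u(\mathbf{W})=\u(\mathbf{S})\mathbf{B}^{-1}$, so $\mathbf{V}_{1}$ is common to $\mathbf{R}$ and $\mathbf{T}$. But the induced map on the symmetric variable is $\mathbf{W}=(\mathbf{B}\mathbf{B}^{*})^{-1}-\mathbf{I}_{m}$ with $\mathbf{B}^{*}\mathbf{B}=\mathbf{I}_{m}-\mathbf{S}$, \emph{not} $\mathbf{W}=(\mathbf{I}_{m}-\mathbf{S})^{-1}-\mathbf{I}_{m}$, so Proposition \ref{lemi} does not apply as you claim: one must also carry the Jacobian of the factor swap $\mathbf{B}^{*}\mathbf{B}\mapsto\mathbf{B}\mathbf{B}^{*}$, which contributes $\prod_{i=1}^{m}b_{ii}^{\beta(2i-m-1)}$ (the ratio of the two triangular-factorisation Jacobians) and is nonconstant for $m\geq 2$; already for $m=2$, $\beta=1$, a direct computation at $\mathbf{S}=\diag(s_{1},s_{2})$ gives $(d\mathbf{W})=(b_{22}/b_{11})\,|\mathbf{I}_{m}-\mathbf{S}|^{-3}(d\mathbf{S})$ rather than $|\mathbf{I}_{m}-\mathbf{S}|^{-3}(d\mathbf{S})$. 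So your claimed intermediate Jacobian $(d\mathbf{W})=|\mathbf{I}_{m}-\mathbf{S}|^{-\beta(m-1)-2}(d\mathbf{S})$ is not justified for the Cholesky-based map (it is the Jacobian of the symmetric-square-root version of the transformation), and your final formula inherits this gap. In fairness, the paper's own proof is terse at precisely this point, asserting (\ref{j1}) without detail; but a blind proof needs to either establish (\ref{j1}) for the triangular map honestly or reformulate the transformation so that Proposition \ref{lemi} genuinely applies, and your plan does neither.
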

\begin{proof}
From Definition \ref{defTRII}.1 the density of $\mathbf{R}$ is
\begin{equation}\label{DR1Ip}
        \frac{\Gamma_{m}^{\beta}[n\beta/2]\quad\left|\mathbf{I}_{m} -
        \mathbf{R}^{*}\mathbf{R}\right|^{\nu\beta/2-p}}
        {\pi^{mn\beta/2}\mathcal{B}_{m}^{\beta}[\nu\beta /2,\kappa;n\beta/2,\tau]} \
        q_{\kappa}\left(\mathbf{I}_{m} - \mathbf{R}^{*}\mathbf{R}\right)
        q_{\tau}\left(\mathbf{R}^{*}\mathbf{R}\right)(d\mathbf{R}),
  \end{equation}
where $p = (m-1)\beta/2+1$. Now, observe that
\begin{eqnarray*}
  \mathbf{T}^{*}\mathbf{T} &=& \u(\mathbf{I}_{m}-\mathbf{R}^{*}\mathbf{R})^{*-1}\mathbf{R}^{*}\mathbf{R}
        \u(\mathbf{I}_{m}-\mathbf{R}^{*}\mathbf{R})^{-1}\\
    &=&
    \u(\mathbf{I}_{m}-\mathbf{R}^{*}\mathbf{R})^{*-1}(\mathbf{I}_{m}-(\mathbf{I}_{m}-\mathbf{R}^{*}\mathbf{R}))
        \u(\mathbf{I}_{m}-\mathbf{R}^{*}\mathbf{R})^{-1}\\
    &=& \u(\mathbf{I}_{m}-\mathbf{R}^{*}\mathbf{R})^{*-1}
    \u(\mathbf{I}_{m}-\mathbf{R}^{*}\mathbf{R})^{-1} - \mathbf{I}_{m},
\end{eqnarray*}
then $\mathbf{I}_{m} + \mathbf{T}^{*}\mathbf{T} = \u(\mathbf{I}_{m}-\mathbf{R}^{*}\mathbf{R})^{*-1}
\u(\mathbf{I}_{m}-\mathbf{R}^{*}\mathbf{R})^{-1}$. Then, by lemmas \ref{lemlt} and \ref{lemi},
\begin{equation}\label{j1}
    (d\mathbf{T}) = |\mathbf{I}_{m} + \mathbf{T}^{*}\mathbf{T}|^{n\beta/2+p} (d\mathbf{R})
\end{equation}
Also note that
\begin{eqnarray*}
  q_{\tau}(\mathbf{T}^{*}\mathbf{T}) &=& q_{\tau}\left(\u(\mathbf{I}_{m}-\mathbf{R}^{*}\mathbf{R})^{*-1}\mathbf{R}^{*}\mathbf{R}
        \u(\mathbf{I}_{m}-\mathbf{R}^{*}\mathbf{R})^{-1}\right)\\
   &=& q_{-\tau}(\mathbf{I}_{m}-\mathbf{R}^{*}\mathbf{R})q_{\tau}(\mathbf{R}^{*}\mathbf{R}),
\end{eqnarray*}
and
\begin{eqnarray*}
  |\mathbf{I}_{m} + \mathbf{T}^{*}\mathbf{T}|^{\nu\beta/2-p}q_{\kappa}(\mathbf{I}_{m} + \mathbf{T}^{*}\mathbf{T}) &=&
  |\mathbf{I}_{m} + \u(\mathbf{I}_{m}-\mathbf{R}^{*}\mathbf{R})^{*-1}\mathbf{R}^{*}
  \mathbf{R}\u(\mathbf{I}_{m}-\mathbf{R}^{*}\mathbf{R})^{-1}|^{\nu\beta/2-p}\\
   & & \times \ q_{\kappa}\left(\mathbf{I}_{m} +
        \u(\mathbf{I}_{m}-\mathbf{R}^{*}\mathbf{R})^{*-1}\mathbf{R}^{*}\mathbf{R}
        \u(\mathbf{I}_{m}-\mathbf{R}^{*}\mathbf{R})^{-1}\right) \\
   &=&  |\mathbf{I}_{m}-\mathbf{R}^{*}\mathbf{R}|^{-\nu\beta/2-p}q_{-\kappa}(\mathbf{I}_{m}-\mathbf{R}^{*}\mathbf{R}).
\end{eqnarray*}
From where
\begin{equation}\label{i1}
    |\mathbf{I}_{m}-\mathbf{R}^{*}\mathbf{R}|^{\nu\beta/2+p}q_{\kappa}(\mathbf{I}_{m}-\mathbf{R}^{*}\mathbf{R})
  = |\mathbf{I}_{m} + \mathbf{T}^{*}\mathbf{T}|^{-\nu\beta/2pp}q_{-\kappa}(\mathbf{I}_{m} + \mathbf{T}^{*}\mathbf{T})
\end{equation}
and
\begin{equation}\label{i2}
    q_{\tau}(\mathbf{R}^{*}\mathbf{R})= q_{\tau}(\mathbf{T}^{*}\mathbf{T})q_{-\tau}(\mathbf{I}_{m} +
    \mathbf{T}^{*}\mathbf{T}).
\end{equation}
Substituting (\ref{i1}), (\ref{i2}) and (\ref{j1}) in (\ref{DR1Ip}), the density desired is obtained.
\end{proof}

Similarly, for the matricvariate  T-Riesz distribution type II we have:

\begin{theorem}\label{teo11}
Let $\u(\mathbf{I}_{m}-\mathbf{R}^{*}\mathbf{R}) \in \mathfrak{T}_{U}^{\beta}(m)$, it is such that
$(\mathbf{I}_{m}-\mathbf{R}^{*}\mathbf{R}) = \u(\mathbf{I}_{m}-\mathbf{R}^{*}
\mathbf{R})^{*}\u(\mathbf{I}_{m} - \mathbf{R}^{*}\mathbf{R})$ is the Cholesky decomposition of
$(\mathbf{I}_{m}-\mathbf{R}^{*}\mathbf{R})$. In addition, let $\mathbf{R} \sim \mathcal{P_{II}R}_{n
\times m}^{\beta, II} (\nu,\kappa,\tau,\mathbf{0}, \mathbf{I}_{n},\mathbf{I}_{m})$. Then, if $\mathbf{T}
= \mathbf{R} \u(\mathbf{I}_{m}-\mathbf{R}^{*}\mathbf{R})^{-1}$, we obtain that
$$
  \mathbf{T} \sim \mathcal{TR}_{n \times m}^{\beta, II} (\nu,\kappa,\tau,\mathbf{0}, \mathbf{I}_{n},\mathbf{I}_{m}).
$$
\end{theorem}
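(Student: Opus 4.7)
The plan is to follow the proof of Theorem \ref{teo1} essentially verbatim, since the transformation $\mathbf{T} = \mathbf{R}\,\u(\mathbf{I}_m - \mathbf{R}^*\mathbf{R})^{-1}$ is the same and the underlying algebraic identities depend only on it, not on which of the two types is assumed. Writing $\u = \u(\mathbf{I}_m - \mathbf{R}^*\mathbf{R})$ and $p = (m-1)\beta/2 + 1$, the relations $\mathbf{I}_m + \mathbf{T}^*\mathbf{T} = \u^{*-1}\u^{-1}$ and $\mathbf{R}^*\mathbf{R} = \u^*\mathbf{T}^*\mathbf{T}\u$ still hold, so Propositions \ref{lemlt} and \ref{lemi} yield the same Jacobian $(d\mathbf{T}) = |\mathbf{I}_m + \mathbf{T}^*\mathbf{T}|^{n\beta/2 + p}(d\mathbf{R})$ together with the determinant identity $|\mathbf{I}_m - \mathbf{R}^*\mathbf{R}| = |\mathbf{I}_m + \mathbf{T}^*\mathbf{T}|^{-1}$.

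Starting from the density of $\mathbf{R}\sim\mathcal{P_{II}R}^{\beta,II}_{n\times m}(\nu,\kappa,\tau,\mathbf{0},\mathbf{I}_n,\mathbf{I}_m)$ in Definition \ref{defPR}.2, the only substantive new work relative to Theorem \ref{teo1} is transforming the factor $q_\kappa[(\mathbf{I}_m - \mathbf{R}^*\mathbf{R})^{-1}]\,q_\tau[(\mathbf{R}^*\mathbf{R})^{-1}]$ (rather than the unbarred product $q_\kappa(\mathbf{I}_m - \mathbf{R}^*\mathbf{R})\,q_\tau(\mathbf{R}^*\mathbf{R})$ of the type I case). My route is: first use (\ref{qk2}) on both factors to rewrite them as $q_{-\kappa^*}(\mathbf{I}_m - \mathbf{R}^*\mathbf{R})\,q_{-\tau^*}(\mathbf{R}^*\mathbf{R})$; next apply (\ref{qk5}) with $\mathbf{B} = \u$ to split $q_{-\tau^*}(\mathbf{R}^*\mathbf{R}) = q_{-\tau^*}(\mathbf{I}_m - \mathbf{R}^*\mathbf{R})\,q_{-\tau^*}(\mathbf{T}^*\mathbf{T})$; combine the two resulting $(\mathbf{I}_m - \mathbf{R}^*\mathbf{R})$-factors into $q_{-(\kappa+\tau)^*}(\mathbf{I}_m - \mathbf{R}^*\mathbf{R})$ via the additivity (\ref{qk41}); and finally convert back using the auxiliary identity $q_\sigma(\mathbf{I}_m + \mathbf{T}^*\mathbf{T}) = q_{-\sigma}(\mathbf{I}_m - \mathbf{R}^*\mathbf{R})$---which drops out of (\ref{qk5}) applied to the trivial congruence $\u^{*-1}(\mathbf{I}_m - \mathbf{R}^*\mathbf{R})\u^{-1} = \mathbf{I}_m$---together with one more application of (\ref{qk2}) to reach $q_{-\kappa-\tau}[(\mathbf{I}_m + \mathbf{T}^*\mathbf{T})^{-1}]\,q_\tau[(\mathbf{T}^*\mathbf{T})^{-1}]$.

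Assembling everything, the total exponent on $|\mathbf{I}_m + \mathbf{T}^*\mathbf{T}|$ collapses to $-\nu\beta/2 + p - n\beta/2 - p = -(\nu+n)\beta/2$, matching (\ref{DTRII}), while the normalising constant $\mathcal{B}_m^\beta[\nu\beta/2, -\kappa; n\beta/2, -\tau]$ is inherited unchanged from Definition \ref{defPR}.2. The main obstacle is the careful bookkeeping of the starred indices $\kappa^*, \tau^*$ introduced by each use of (\ref{qk2}); once one verifies that they cancel cleanly through (\ref{qk41}) and the reflection $\sigma\leftrightarrow -\sigma$ in the auxiliary identity above, the rest is a mechanical transcription of Theorem \ref{teo1}'s argument.
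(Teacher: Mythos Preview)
Your overall strategy---parallel the proof of Theorem~\ref{teo1}, with the same transformation, the same Jacobian $(d\mathbf{T}) = |\mathbf{I}_m + \mathbf{T}^*\mathbf{T}|^{n\beta/2 + p}(d\mathbf{R})$, and the same determinant bookkeeping---is exactly what the paper does (its proof reads, in full, ``similar to that given for Theorem~\ref{teo1}''). So at the level of approach you are aligned with the paper.

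There is, however, a genuine gap in the part you flag as ``the only substantive new work''. When you invoke (\ref{qk2}) you write the result as $q_{-\kappa^*}(\,\cdot\,)$ and $q_{-\tau^*}(\,\cdot\,)$, but (\ref{qk2}) actually yields the \emph{dual} generalised power $q^{*}_{-\kappa^{*}}(\,\cdot\,)$ and $q^{*}_{-\tau^{*}}(\,\cdot\,)$, with a star on the symbol $q$ itself. These are different functions of a positive-definite matrix (already for $m=2$: $q_\kappa$ is built from upper-left principal minors, $q^{*}_\kappa$ from lower-right ones), so the distinction is not merely notational. Your next two moves---applying (\ref{qk5}) to split $q_{-\tau^*}(\mathbf{R}^*\mathbf{R})$ and then recombining via (\ref{qk41})---are stated in the paper only for $q$, not for $q^{*}$; and your final reconversion via (\ref{qk2}) would require $q_{(\kappa+\tau)^*}(\mathbf{I}_m+\mathbf{T}^*\mathbf{T}) = q_{-\kappa-\tau}[(\mathbf{I}_m+\mathbf{T}^*\mathbf{T})^{-1}]$, which again mixes $q$ and $q^{*}$ and is false in general. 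In short, the two ``errors'' introduced at the first and last uses of (\ref{qk2}) do \emph{not} cancel, so the chain as written does not close.

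The bookkeeping you identify as the main obstacle (the starred indices $\kappa^*,\tau^*$) is therefore not the real issue; the real issue is tracking $q$ versus $q^{*}$. A correct execution needs the $q^{*}$-analogues of (\ref{qk5})--(\ref{qk6}) (which do hold---see \citet[Ch.~VII]{fk:94}---but are not among the properties listed in Section~\ref{sec2}), after which your outline goes through. The paper, by simply saying ``similar'', sidesteps this point entirely.
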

\begin{proof}
The proof is similar to that given for Theorem \ref{teo1}.
\end{proof}

Next we obtain the matricvariate T-Riesz distributions in terms of matricvariate Kotz-Riesz and Riesz
distributions.

\begin{theorem}\label{teo31} Let $\kappa = (k_{1}, k_{2}, \dots, k_{m})\in \Re^{m}$, and $\tau = (t_{1},
t_{2}, \dots, t_{m})\in \Re^{m}$. Also define $\mathbf{T}\in \mathfrak{L}_{n,m}^{\beta}$ as
$$
  \mathbf{T} = \mathbf{X}\u(\mathbf{U})^{-1}
$$
where $\u(\mathbf{U}) \in \mathfrak{T}_{U}^{\beta}(m)$ is such that $\mathbf{U} =
\u(\mathbf{U})^{*}\u(\mathbf{U})$ is the Cholesky decomposition of $\mathbf{U }\sim
\mathcal{R}_{m}^{\beta,I}(\nu\beta/2, \kappa,\mathbf{I}_{m})$, $\re(\nu\beta/2)> (m-1)\beta/2-k_{m}$
independent of $\mathbf{X} \sim \mathcal{KR}_{n \times m}^{\beta,I}(\tau, \mathbf{0},
\mathbf{I}_{n},\mathbf{I}_{m})$, $\re(n\beta/2)> (m-1)\beta/2-t_{m}$. Then, $\mathbf{T} \sim
\mathcal{TR}_{n \times m}^{\beta,I}(\nu,\kappa, \tau, \mathbf{0}, \mathbf{I}_{n}, \mathbf{I}_{m})$.
\end{theorem}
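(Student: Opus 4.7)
The plan is to reduce Theorem \ref{teo31} to Theorem \ref{teo1} by an algebraic identification, so that no new change-of-variables computation is required. The hypotheses of Theorem \ref{teo31} coincide verbatim with those of Definition \ref{defPR}.1 upon setting $\mathbf{U}_1:=\mathbf{U}$, so the Pearson-to-$T$ map produced by Theorem \ref{teo1} should send the associated Pearson II-Riesz matrix to the matrix $\mathbf{T}$ appearing in the statement. I would avoid a direct Jacobian argument on the joint density of $(\mathbf{X},\mathbf{U})$, because the exponent under the substitution $\mathbf{X}=\mathbf{T}\u(\mathbf{U})$ produces $\tr\bigl(\u(\mathbf{U})^{*}(\mathbf{I}_{m}+\mathbf{T}^{*}\mathbf{T})\u(\mathbf{U})\bigr)$, which is not of the clean form $\tr\bigl((\mathbf{I}_{m}+\mathbf{T}^{*}\mathbf{T})\mathbf{U}\bigr)$ and consequently does not reduce the $\mathbf{U}$-integral to a single generalised gamma of constant weight.

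Concretely, I would form
\[
  \mathbf{R}\;:=\;\mathbf{X}\,\u(\mathbf{U}+\mathbf{X}^{*}\mathbf{X})^{-1}.
\]
Since $\mathbf{X}\sim\mathcal{KR}_{n\times m}^{\beta,I}(\tau,\mathbf{0},\mathbf{I}_{n},\mathbf{I}_{m})$ is independent of $\mathbf{U}\sim\mathcal{R}_{m}^{\beta,I}(\nu\beta/2,\kappa,\mathbf{I}_{m})$, Definition \ref{defPR}.1 gives $\mathbf{R}\sim\mathcal{P_{II}R}_{n\times m}^{\beta,I}(\nu,\kappa,\tau,\mathbf{0},\mathbf{I}_{n},\mathbf{I}_{m})$, and Theorem \ref{teo1} then yields
\[
  \mathbf{R}\,\u(\mathbf{I}_{m}-\mathbf{R}^{*}\mathbf{R})^{-1}\;\sim\;\mathcal{TR}_{n\times m}^{\beta,I}(\nu,\kappa,\tau,\mathbf{0},\mathbf{I}_{n},\mathbf{I}_{m}).
\]
Everything therefore reduces to proving the purely algebraic identity $\mathbf{R}\,\u(\mathbf{I}_{m}-\mathbf{R}^{*}\mathbf{R})^{-1}=\mathbf{X}\u(\mathbf{U})^{-1}=\mathbf{T}$.

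For that identification I would abbreviate $\mathbf{A}:=\u(\mathbf{U}+\mathbf{X}^{*}\mathbf{X})$ and $\mathbf{B}:=\u(\mathbf{U})$ and substitute $\mathbf{R}=\mathbf{X}\mathbf{A}^{-1}$, which gives
\[
  \mathbf{I}_{m}-\mathbf{R}^{*}\mathbf{R}\;=\;\mathbf{A}^{*-1}\bigl[(\mathbf{U}+\mathbf{X}^{*}\mathbf{X})-\mathbf{X}^{*}\mathbf{X}\bigr]\mathbf{A}^{-1}\;=\;(\mathbf{B}\mathbf{A}^{-1})^{*}(\mathbf{B}\mathbf{A}^{-1}).
\]
Because $\mathfrak{T}_{U}^{\beta}(m)$ is closed under products and inverses and both $\mathbf{A},\mathbf{B}$ have strictly positive diagonals, $\mathbf{B}\mathbf{A}^{-1}\in\mathfrak{T}_{U}^{\beta}(m)$ with positive diagonal entries $b_{ii}/a_{ii}$; uniqueness of the Cholesky decomposition therefore forces $\u(\mathbf{I}_{m}-\mathbf{R}^{*}\mathbf{R})=\mathbf{B}\mathbf{A}^{-1}$. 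Telescoping the triangular factors,
\[
  \mathbf{R}\,\u(\mathbf{I}_{m}-\mathbf{R}^{*}\mathbf{R})^{-1}\;=\;\mathbf{X}\mathbf{A}^{-1}\cdot\mathbf{A}\mathbf{B}^{-1}\;=\;\mathbf{X}\mathbf{B}^{-1}\;=\;\mathbf{T},
\]
as required. The main obstacle is the Cholesky uniqueness invocation, which relies on the group structure of $\mathfrak{T}_{U}^{\beta}(m)$ with positive diagonals — a standard structural fact holding uniformly across the four normed division algebras — so no further integration or Jacobian computation is needed once Theorem \ref{teo1} is in hand.
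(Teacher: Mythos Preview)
Your proposal is correct and follows essentially the same route as the paper's own proof: form $\mathbf{R}=\mathbf{X}\u(\mathbf{U}+\mathbf{X}^{*}\mathbf{X})^{-1}$, invoke Definition~\ref{defPR}.1 and Theorem~\ref{teo1}, and then establish the algebraic identity $\mathbf{R}\,\u(\mathbf{I}_{m}-\mathbf{R}^{*}\mathbf{R})^{-1}=\mathbf{X}\u(\mathbf{U})^{-1}$ via uniqueness of the Cholesky factorisation of $\mathbf{I}_{m}-\mathbf{R}^{*}\mathbf{R}=(\mathbf{B}\mathbf{A}^{-1})^{*}(\mathbf{B}\mathbf{A}^{-1})$. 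Your explicit remark that $\mathbf{B}\mathbf{A}^{-1}$ lies in $\mathfrak{T}_{U}^{\beta}(m)$ with positive diagonal (justifying the uniqueness step) is a welcome clarification that the paper leaves implicit.
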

\begin{proof}
From  theorems \ref{teo1} and \ref{teo11}, we know that $\mathbf{T} = \mathbf{R}
\u(\mathbf{I}_{m}-\mathbf{R}^{*}\mathbf{R})^{-1}$. The desired result is obtained if we can to proof that
$$
  \mathbf{T} = \mathbf{R}\u(\mathbf{I}_{m}-\mathbf{R}^{*}\mathbf{R})^{-1} =
  \mathbf{X}\u(\mathbf{U})^{-1}.
$$
With this goal in mind, from Definition \ref{defPR} we know that $\mathbf{U} + \mathbf{X}^{*}\mathbf{X} =
\u(\mathbf{U} + \mathbf{X}^{*}\mathbf{X})^{*}\u(\mathbf{U} + \mathbf{X}^{*}\mathbf{X})$, with
$\u(\mathbf{U} + \mathbf{X}^{*}\mathbf{X}) \in \mathfrak{T}_{U}^{\beta}(m)$ then $\mathbf{R} = \mathbf{X}
\u(\mathbf{U} + \mathbf{X}^{*}\mathbf{X})^{-1}$. Hence
\begin{equation}\label{eqx}
    \mathbf{X} = \mathbf{R}\u(\mathbf{U} + \mathbf{X}^{*}\mathbf{X})
\end{equation}
and $\mathbf{R}^{*}\mathbf{R} = \u(\mathbf{U} + \mathbf{X}^{*}\mathbf{X})^{*-1}\mathbf{X} ^{*}\mathbf{X}
\u(\mathbf{U} + \mathbf{X}^{*}\mathbf{X})^{-1}$. By hypothesis $\mathbf{U} =
\u(\mathbf{U})^{*}\u(\mathbf{U})$, where $\u(\mathbf{U}) \in \mathfrak{T}_{U}^{\beta}(m)$. Therefore
\begin{eqnarray*}
  \mathbf{I}_{m} - \mathbf{R}^{*}\mathbf{R} &=& \mathbf{I}_{m} - \u(\mathbf{U} + \mathbf{X}^{*}\mathbf{X})^{*-1}
    \mathbf{X} ^{*}\mathbf{X} \u(\mathbf{U} + \mathbf{X}^{*}\mathbf{X})^{-1}\\
   &=& \u(\mathbf{U} + \mathbf{X}^{*}\mathbf{X})^{*-1} (\u(\mathbf{U} + \mathbf{X}^{*}\mathbf{X})^{*}\u(\mathbf{U} +
    \mathbf{X}^{*}\mathbf{X})- \mathbf{X} ^{*}\mathbf{X}) \u(\mathbf{U} + \mathbf{X}^{*}\mathbf{X})^{-1} \\
   &=& \u(\mathbf{U} + \mathbf{X}^{*}\mathbf{X})^{*-1}\mathbf{U}\u(\mathbf{U} + \mathbf{X}^{*}\mathbf{X})^{-1} \\
   &=& \u(\mathbf{U} + \mathbf{X}^{*}\mathbf{X})^{*-1}\u(\mathbf{U})^{*}\u(\mathbf{U})\u(\mathbf{U} + \mathbf{X}^{*}
    \mathbf{X})^{-1}  \\
   &=& \left(\u(\mathbf{U})\u(\mathbf{U} + \mathbf{X}^{*}\mathbf{X})^{-1}\right)^{*}\left(\u(\mathbf{U})\u(\mathbf{U} +
    \mathbf{X}^{*}\mathbf{X})^{-1}\right).
\end{eqnarray*}
Thus, by uniqueness of the Cholesky decomposition, $\u(\mathbf{U})\u(\mathbf{U} + \mathbf{X}^{*}
\mathbf{X})^{-1} = \u(\mathbf{I}_{m} - \mathbf{R}^{*}\mathbf{R}) \in \mathfrak{T}_{U}^{\beta}(m)$. Hence
\begin{eqnarray*}
  \mathbf{T} = \mathbf{R} \u(\mathbf{I}_{m}-\mathbf{R}^{*}\mathbf{R})^{-1} &=& \mathbf{R}\left(\u(\mathbf{U})\u(\mathbf{U} + \mathbf{X}^{*}
\mathbf{X})^{-1}\right)^{-1}\\
   &=& \mathbf{R}\u(\mathbf{U} + \mathbf{X}^{*}\mathbf{X})\u(\mathbf{U})^{-1},  \quad \mbox{by (\ref{eqx}),}  \\
   &=& \mathbf{X}\u(\mathbf{U})^{-1},
\end{eqnarray*}
which concludes the proof.
\end{proof}

Similar to Theorem \ref{teo31}, now assuming distributions Kotz-Riesz and Riesz type II  the following
result is obtained.

\begin{theorem}\label{teo32}
Let $\kappa = (k_{1}, k_{2}, \dots, k_{m})\in \Re^{m}$, and $\tau = (t_{1}, t_{2}, \dots, t_{m})\in
\Re^{m}$. Also define $\mathbf{T}\in \mathfrak{L}_{m,n}^{\beta}$ as
$$
  \mathbf{T} = \mathbf{X}\u(\mathbf{U})^{-1}
$$
where $\u(\mathbf{U}) \in \mathfrak{T}_{U}^{\beta}(m)$ is such that $\mathbf{U} =
\u(\mathbf{U})^{*}\u(\mathbf{U})$ is the Cholesky decomposition of $\mathbf{U }\sim
\mathcal{R}_{m}^{\beta,II}(\nu\beta/2, \kappa,\mathbf{I}_{m})$, $\re(\nu\beta/2)> (m-1)\beta/2+k_{1}$
independent of $\mathbf{X} \sim \mathcal{KR}_{n \times m}^{\beta,II}(\tau, \mathbf{0},
\mathbf{I}_{n},\mathbf{I}_{m})$, $\re(n\beta/2)> (m-1)\beta/2+t_{1}$. Then, $ \mathbf{T} \sim
\mathcal{TR}_{n \times m}^{\beta,II}(\nu,\kappa, \tau, \mathbf{0}, \mathbf{I}_{n}, \mathbf{I}_{m})$.
\end{theorem}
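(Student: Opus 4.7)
The plan is to mirror the proof of Theorem \ref{teo31} step by step, substituting type II ingredients for type I wherever a distribution is named; the algebraic backbone of the argument is distribution-free, so very little new work is needed. First I would invoke Theorem \ref{teo11}, which states that if $\mathbf{R} \sim \mathcal{P_{II}R}_{n \times m}^{\beta, II}(\nu,\kappa,\tau,\mathbf{0},\mathbf{I}_{n},\mathbf{I}_{m})$, then $\mathbf{R}\,\u(\mathbf{I}_{m}-\mathbf{R}^{*}\mathbf{R})^{-1} \sim \mathcal{TR}_{n \times m}^{\beta, II}(\nu,\kappa,\tau,\mathbf{0},\mathbf{I}_{n},\mathbf{I}_{m})$. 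Thus it suffices to exhibit such an $\mathbf{R}$ and to verify the algebraic identity $\mathbf{X}\,\u(\mathbf{U})^{-1} = \mathbf{R}\,\u(\mathbf{I}_{m}-\mathbf{R}^{*}\mathbf{R})^{-1}$.

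Definition \ref{defPR}.2 immediately provides a candidate: with $\mathbf{U} \sim \mathcal{R}_{m}^{\beta,II}(\nu\beta/2,\kappa,\mathbf{I}_{m})$ independent of $\mathbf{X} \sim \mathcal{KR}_{n \times m}^{\beta,II}(\tau,\mathbf{0},\mathbf{I}_{n},\mathbf{I}_{m})$, set $\mathbf{R} := \mathbf{X}\,\u(\mathbf{U}+\mathbf{X}^{*}\mathbf{X})^{-1}$; then $\mathbf{R} \sim \mathcal{P_{II}R}_{n \times m}^{\beta, II}$. This is the only step where the type II hypotheses are actually used. Everything that follows is a pure Cholesky computation on a single sample path.

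Next I would reproduce verbatim the calculation from Theorem \ref{teo31}: from $\mathbf{R}^{*}\mathbf{R} = \u(\mathbf{U}+\mathbf{X}^{*}\mathbf{X})^{*-1}\mathbf{X}^{*}\mathbf{X}\,\u(\mathbf{U}+\mathbf{X}^{*}\mathbf{X})^{-1}$, subtracting from $\mathbf{I}_{m}$ and using $\mathbf{U} = \u(\mathbf{U})^{*}\u(\mathbf{U})$ yields
$$
\mathbf{I}_{m}-\mathbf{R}^{*}\mathbf{R}
= \bigl(\u(\mathbf{U})\,\u(\mathbf{U}+\mathbf{X}^{*}\mathbf{X})^{-1}\bigr)^{*}\bigl(\u(\mathbf{U})\,\u(\mathbf{U}+\mathbf{X}^{*}\mathbf{X})^{-1}\bigr).
$$
By uniqueness of the Cholesky decomposition this identifies $\u(\mathbf{I}_{m}-\mathbf{R}^{*}\mathbf{R}) = \u(\mathbf{U})\,\u(\mathbf{U}+\mathbf{X}^{*}\mathbf{X})^{-1}$, after which
$$
\mathbf{R}\,\u(\mathbf{I}_{m}-\mathbf{R}^{*}\mathbf{R})^{-1}
= \mathbf{R}\,\u(\mathbf{U}+\mathbf{X}^{*}\mathbf{X})\,\u(\mathbf{U})^{-1}
= \mathbf{X}\,\u(\mathbf{U})^{-1} = \mathbf{T},
$$
using $\mathbf{R}\,\u(\mathbf{U}+\mathbf{X}^{*}\mathbf{X}) = \mathbf{X}$ from the definition of $\mathbf{R}$. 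Combining this with Theorem \ref{teo11} gives the claim.

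The only real obstacle is bookkeeping: one must be careful to invoke Definition \ref{defPR}.\emph{2} (rather than .1) in the Pearson identification step and Theorem \ref{teo11} (rather than \ref{teo1}) in the final distributional conclusion, and to check that the parameter restrictions $\re(\nu\beta/2) > (m-1)\beta/2 + k_{1}$ and $\re(n\beta/2) > (m-1)\beta/2 + t_{1}$ in the hypothesis match the type II existence conditions appearing in definitions \ref{defR}.2, \ref{defKR}.2 and \ref{defPR}.2. Because the Cholesky identity that drives the proof is independent of the choice of $q_{\kappa}$ vs.\ $q_{\kappa}(\cdot^{-1})$, no new analytic input is required beyond what was used for Theorem \ref{teo31}.
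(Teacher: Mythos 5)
Your proposal is correct and matches the paper's own treatment: the paper proves Theorem \ref{teo32} simply by remarking that the argument is the same as for Theorem \ref{teo31}, which is exactly the substitution you carry out — invoking Theorem \ref{teo11} and Definition \ref{defPR}.2 in place of their type I counterparts, while the Cholesky identity $\u(\mathbf{I}_{m}-\mathbf{R}^{*}\mathbf{R}) = \u(\mathbf{U})\u(\mathbf{U}+\mathbf{X}^{*}\mathbf{X})^{-1}$ is distribution-free. Your explicit checking of the type II parameter restrictions is a sound piece of bookkeeping the paper leaves implicit.
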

\begin{proof}
The proof is the same that given for Theorem \ref{teo31}.
\end{proof}

\begin{corollary}\label{corTRI}
Assume that $\mathbf{T} \sim \mathcal{TR}_{n \times m}^{\beta,I}(\nu,\kappa, \tau, \mathbf{0},
\mathbf{I}_{n}, \mathbf{I}_{m})$ and define $\mathbf{S} = \u(\mathbf{\Delta})^{-1} \mathbf{T}
\u(\mathbf{\Pi}) + \boldsymbol{\mu}$, where $\u(\mathbf{\Delta}) \in \mathfrak{T}_{U}^{\beta}(n)$ and
$\u(\mathbf{\Pi}) \in \mathfrak{T}_{U}^{\beta}(m)$ are such that $\mathbf{\Delta}
=\u(\mathbf{\Delta})^{*} \u(\mathbf{\Delta})$ and $\mathbf{\Pi} = \u(\mathbf{\Pi})^{*}\u(\mathbf{\Pi})$
are the Cholesky decomposition of $\mathbf{\Delta}$ and $\mathbf{\Pi}$, respectively. In addition, let
$\boldsymbol{\mu} \in \mathfrak{L}_{n \times m}^{\beta}$, $\mathbf{\Delta} \in \mathfrak{P}_{n}^{\beta}$
and $\mathbf{\Pi} \in \mathfrak{P}_{m}^{\beta}$, constant matrices. Then the density of $\mathbf{S}$ is
$$
  \frac{\Gamma_{m}^{\beta}[n\beta/2]|\mathbf{\Pi}|^{\nu\beta/2}q_{\kappa}(\mathbf{\Pi})
  |\mathbf{\Delta}|^{m\beta/2}\quad\left|\mathbf{\Pi} + (\mathbf{S} - \boldsymbol{\mu})^{*}
  \mathbf{\Delta}(\mathbf{S} - \boldsymbol{\mu})\right|^{-(\nu+n)\beta /2}}
  {\pi^{mn\beta/2}\mathcal{B}_{m}^{\beta}[\nu\beta /2,\kappa;n\beta/2,\tau]}
$$
$$
  \times \
  q_{-\kappa-\tau}\left(\mathbf{\Pi} + (\mathbf{S} - \boldsymbol{\mu})^{*}\mathbf{\Delta}(\mathbf{S} - \boldsymbol{\mu})\right)
  q_{\tau}\left((\mathbf{S} - \boldsymbol{\mu})^{*}\mathbf{\Delta}(\mathbf{S} - \boldsymbol{\mu})\right)(d\mathbf{S}),
$$
where $\mathbf{\Pi} + (\mathbf{S} - \boldsymbol{\mu})^{*}\mathbf{\Delta}(\mathbf{S} - \boldsymbol{\mu})
\in \mathfrak{P}_{m}^{\beta}$, $\re(\nu\beta/2)> (m-1)\beta/2-k_{m}$, $\re(n\beta/2)>
(m-1)\beta/2-t_{m}$. This fact is denoted as $\mathbf{S} \sim \mathcal{TR}_{n \times
m}^{\beta,I}(\nu,\kappa, \tau, \boldsymbol{\mu}, \mathbf{\Delta}, \mathbf{\Pi})$.
\end{corollary}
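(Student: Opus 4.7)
The plan is to obtain the density of $\mathbf{S}$ as a change of variables from $\mathbf{T}$, since the corollary follows by direct substitution into the density (\ref{DTRI}) of the standard T-Riesz type I law. First, I would invert the defining relation to write
$$
\mathbf{T} = \u(\mathbf{\Delta})(\mathbf{S}-\boldsymbol{\mu})\u(\mathbf{\Pi})^{-1},
$$
so that an application of Proposition \ref{lemlt} (with $\mathbf{A}=\u(\mathbf{\Delta})$ and $\mathbf{B}=\u(\mathbf{\Pi})^{-1}$, recalling $\mathbf{\Delta}=\u(\mathbf{\Delta})^{*}\u(\mathbf{\Delta})$ and $\mathbf{\Pi}=\u(\mathbf{\Pi})^{*}\u(\mathbf{\Pi})$) yields the Jacobian
$$
(d\mathbf{T}) = |\mathbf{\Delta}|^{m\beta/2}\,|\mathbf{\Pi}|^{-n\beta/2}\,(d\mathbf{S}).
$$

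Next, I would expand
$$
\mathbf{T}^{*}\mathbf{T} = \u(\mathbf{\Pi})^{*-1}\,\mathbf{W}\,\u(\mathbf{\Pi})^{-1},
\qquad \mathbf{W} := (\mathbf{S}-\boldsymbol{\mu})^{*}\mathbf{\Delta}(\mathbf{S}-\boldsymbol{\mu}),
$$
and use the identity $\u(\mathbf{\Pi})^{*}\u(\mathbf{\Pi})=\mathbf{\Pi}$ to factor
$$
\mathbf{I}_{m}+\mathbf{T}^{*}\mathbf{T} = \u(\mathbf{\Pi})^{*-1}(\mathbf{\Pi}+\mathbf{W})\u(\mathbf{\Pi})^{-1}.
$$
Taking determinants gives $|\mathbf{I}_{m}+\mathbf{T}^{*}\mathbf{T}|=|\mathbf{\Pi}|^{-1}|\mathbf{\Pi}+\mathbf{W}|$, which produces the factor $|\mathbf{\Pi}|^{(\nu+n)\beta/2}$ needed for the target density.

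The core of the argument is the manipulation of the highest–weight–vector factors. Applying property (\ref{qk6}) with $\mathbf{B}=\u(\mathbf{\Pi})^{-1}$ (so that $\mathbf{C}=\mathbf{\Pi}^{-1}$, and hence $q_{-\kappa}(\mathbf{C})=q_{\kappa}(\mathbf{\Pi})$ by (\ref{qk1})) gives
$$
q_{-\kappa-\tau}(\mathbf{I}_{m}+\mathbf{T}^{*}\mathbf{T}) = q_{\kappa+\tau}(\mathbf{\Pi})\,q_{-\kappa-\tau}(\mathbf{\Pi}+\mathbf{W}),
\qquad
q_{\tau}(\mathbf{T}^{*}\mathbf{T}) = q_{-\tau}(\mathbf{\Pi})\,q_{\tau}(\mathbf{W}).
$$
Multiplying these and using (\ref{qk41}) together with $q_{\tau}(\mathbf{\Pi})\,q_{-\tau}(\mathbf{\Pi})=1$ (immediate from (\ref{qk1})), the $\tau$-factor in $\mathbf{\Pi}$ cancels, leaving the single factor $q_{\kappa}(\mathbf{\Pi})$. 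This cancellation is the step I expect to be the main bookkeeping obstacle; once it is done, substituting the Jacobian, the determinant identity and the $q$-identities into (\ref{DTRI}) and collecting the powers of $|\mathbf{\Pi}|$ (namely $(\nu+n)\beta/2 - n\beta/2 = \nu\beta/2$) immediately produces the density claimed, completing the proof.
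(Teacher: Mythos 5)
Your proposal is correct and follows essentially the same route as the paper, whose proof consists exactly of the change of variable $\mathbf{S} = \u(\mathbf{\Delta})^{-1}\mathbf{T}\u(\mathbf{\Pi}) + \boldsymbol{\mu}$ in (\ref{DTRI}) with the Jacobian $(d\mathbf{T}) = |\mathbf{\Delta}|^{m\beta/2}|\mathbf{\Pi}|^{-n\beta/2}(d\mathbf{S})$ from Proposition \ref{lemlt}; your determinant and $q$-factor bookkeeping merely makes explicit what the paper leaves implicit. One cosmetic slip: the identities $q_{-\kappa-\tau}(\mathbf{I}_{m}+\mathbf{T}^{*}\mathbf{T}) = q_{\kappa+\tau}(\mathbf{\Pi})\,q_{-\kappa-\tau}(\mathbf{\Pi}+\mathbf{W})$ and $q_{\tau}(\mathbf{T}^{*}\mathbf{T}) = q_{-\tau}(\mathbf{\Pi})\,q_{\tau}(\mathbf{W})$ follow from (\ref{qk6}) taken with $\mathbf{B}=\u(\mathbf{\Pi})$, so that $\mathbf{C}=\mathbf{\Pi}$ directly --- with your choice $\mathbf{B}=\u(\mathbf{\Pi})^{-1}$ one has $\mathbf{C}=\u(\mathbf{\Pi})^{*-1}\u(\mathbf{\Pi})^{-1}$, which is not $\mathbf{\Pi}^{-1}$, and the sandwich in (\ref{qk6}) points the wrong way --- but the displayed identities you actually use are correct, so this does not affect the proof.
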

\begin{proof}
The proof is follows making in (\ref{DTRI}) the change of variable $\mathbf{S} = \u(\mathbf{\Delta})^{-1}
\mathbf{T} \u(\mathbf{\Pi}) + \boldsymbol{\mu}$. Observing that by Proposition \ref{lemlt},
$$
  (d\mathbf{T}) = |\mathbf{\Delta}|^{m\beta/2}|\mathbf{\Pi}|^{-n\beta/2}(d\mathbf{S}).
$$
\end{proof}

\begin{corollary}\label{corTRII}
Assume that $\mathbf{T} \sim \mathcal{TR}_{n \times m}^{\beta,II}(\nu,\kappa, \tau, \mathbf{0},
\mathbf{I}_{n}, \mathbf{I}_{m})$ and define $\mathbf{S} = \u(\mathbf{\Delta})^{-1} \mathbf{T}
\u(\mathbf{\Pi}) + \boldsymbol{\mu}$, where $\u(\mathbf{\Delta}) \in \mathfrak{T}_{U}^{\beta}(n)$ and
$\u(\mathbf{\Pi}) \in \mathfrak{T}_{U}^{\beta}(m)$ are such that $\mathbf{\Delta}
=\u(\mathbf{\Delta})^{*} \u(\mathbf{\Delta})$ and $\mathbf{\Pi} = \u(\mathbf{\Pi})^{*}\u(\mathbf{\Pi})$
are the Cholesky decomposition of $\mathbf{\Delta}$ and $\mathbf{\Pi}$, respectively. And
$\boldsymbol{\mu} \in \mathfrak{L}_{n \times m}^{\beta}$, $\mathbf{\Delta} \in \mathfrak{P}_{n}^{\beta}$
and $\mathbf{\Pi} \in \mathfrak{P}_{m}^{\beta}$ are constant matrices. Then the density of $\mathbf{S}$
$$
  \frac{\Gamma_{m}^{\beta}[n\beta/2]|\mathbf{\Pi}|^{\nu\beta/2}q_{\kappa}(\mathbf{\Pi}^{-1})
  |\mathbf{\Delta}|^{m\beta/2}\quad\left|\mathbf{\Pi} + (\mathbf{S} - \boldsymbol{\mu})^{*}
  \mathbf{\Delta}(\mathbf{S} - \boldsymbol{\mu})\right|^{-(\nu+n)\beta /2}}
  {\pi^{mn\beta/2}\mathcal{B}_{m}^{\beta}[\nu\beta /2,-\kappa;n\beta/2,-\tau]}
$$
$$
  \times \
  q_{-\kappa-\tau}\left[\left(\mathbf{\Pi} + (\mathbf{S} - \boldsymbol{\mu})^{*}\mathbf{\Delta}(\mathbf{S} -
  \boldsymbol{\mu})\right)^{-1}\right]
  q_{\tau}\left[\left((\mathbf{S} - \boldsymbol{\mu})^{*}\mathbf{\Delta}(\mathbf{S} -
  \boldsymbol{\mu})\right)^{-1}\right](d\mathbf{S}),
$$
where $\mathbf{\Pi} + (\mathbf{S} - \boldsymbol{\mu})^{*}\mathbf{\Delta}(\mathbf{S} - \boldsymbol{\mu})
\in \mathfrak{P}_{m}^{\beta}$, $\re(\nu\beta/2)> (m-1)\beta/2+k_{1}$, $\re(n\beta/2)>
(m-1)\beta/2+t_{1}$. This fact is denoted as $\mathbf{S} \sim \mathcal{TR}_{n \times
m}^{\beta,II}(\nu,\kappa, \tau, \boldsymbol{\mu}, \mathbf{\Delta}, \mathbf{\Pi})$.
\end{corollary}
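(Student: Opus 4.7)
The plan is to mirror the proof of Corollary \ref{corTRI} almost verbatim, starting from the density (\ref{DTRII}) of $\mathbf{T} \sim \mathcal{TR}_{n \times m}^{\beta,II}(\nu,\kappa,\tau,\mathbf{0},\mathbf{I}_{n},\mathbf{I}_{m})$ and performing the affine change of variables $\mathbf{T} = \u(\mathbf{\Delta})(\mathbf{S} - \boldsymbol{\mu})\u(\mathbf{\Pi})^{-1}$, which is exactly the inverse of the defining map. By Proposition \ref{lemlt} applied with constant multipliers $\u(\mathbf{\Delta}) \in \mathfrak{L}_{n,n}^{\beta}$ and $\u(\mathbf{\Pi})^{-1} \in \mathfrak{L}_{m,m}^{\beta}$, and using $|\u(\mathbf{\Delta})^{*}\u(\mathbf{\Delta})| = |\mathbf{\Delta}|$ and $|\u(\mathbf{\Pi})^{*-1}\u(\mathbf{\Pi})^{-1}| = |\mathbf{\Pi}|^{-1}$, the Jacobian is
$$
(d\mathbf{T}) = |\mathbf{\Delta}|^{m\beta/2}\,|\mathbf{\Pi}|^{-n\beta/2}\,(d\mathbf{S}).
$$

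Next I would rewrite the three structural pieces of the density in terms of $\mathbf{S}$. The algebraic identity
$$
\mathbf{I}_{m} + \mathbf{T}^{*}\mathbf{T} = \u(\mathbf{\Pi})^{*-1}\bigl[\mathbf{\Pi} + (\mathbf{S}-\boldsymbol{\mu})^{*}\mathbf{\Delta}(\mathbf{S}-\boldsymbol{\mu})\bigr]\u(\mathbf{\Pi})^{-1}
$$
immediately yields $|\mathbf{I}_{m} + \mathbf{T}^{*}\mathbf{T}|^{-(\nu+n)\beta/2} = |\mathbf{\Pi}|^{(\nu+n)\beta/2}\,|\mathbf{\Pi} + (\mathbf{S}-\boldsymbol{\mu})^{*}\mathbf{\Delta}(\mathbf{S}-\boldsymbol{\mu})|^{-(\nu+n)\beta/2}$. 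For the two generalised‑power factors, I would invoke property (\ref{qk6}) (with the $\u(\mathbf{\Pi})$ conjugation) together with (\ref{qk2}) to convert each $q_{\kappa}[(\cdot)^{-1}]$ into a $q_{\kappa}(\cdot)$–type factor, producing a $q_{\kappa}(\mathbf{\Pi}^{-1})$ factor from the $q_{-\kappa-\tau}$ piece and a $q_{\tau}(\mathbf{\Pi}^{-1})$–type factor from the $q_{\tau}[(\mathbf{T}^{*}\mathbf{T})^{-1}]$ piece; after cancellation of the $q_{\tau}(\mathbf{\Pi}^{-1})$ against its counterpart introduced by the identity $\mathbf{T}^{*}\mathbf{T} = \u(\mathbf{\Pi})^{*-1}(\mathbf{S}-\boldsymbol{\mu})^{*}\mathbf{\Delta}(\mathbf{S}-\boldsymbol{\mu})\u(\mathbf{\Pi})^{-1}$, the only surviving extraneous factor outside the $\mathbf{S}$–dependent kernel is $|\mathbf{\Pi}|^{\nu\beta/2}q_{\kappa}(\mathbf{\Pi}^{-1})\,|\mathbf{\Delta}|^{m\beta/2}$, which is precisely what appears in the stated density.

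Collecting all pieces and combining with the Jacobian (the $|\mathbf{\Pi}|^{-n\beta/2}$ from $(d\mathbf{T})$ merges with $|\mathbf{\Pi}|^{(\nu+n)\beta/2}$ to give $|\mathbf{\Pi}|^{\nu\beta/2}$) delivers exactly the density announced in the statement, with the same normalising constant $\pi^{-mn\beta/2}\Gamma_{m}^{\beta}[n\beta/2]/\mathcal{B}_{m}^{\beta}[\nu\beta/2,-\kappa;n\beta/2,-\tau]$.

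The main obstacle will be the careful bookkeeping in the second step: unlike the type I case, every $q_{\bullet}$ is evaluated at an inverse, so each application of (\ref{qk6}) introduces a sign flip of the weight through (\ref{qk2}), and one must verify that the $q_{\kappa}$, $q_{\tau}$, and $q_{-\kappa-\tau}$ factors combine so that only $q_{\kappa}(\mathbf{\Pi}^{-1})$ survives on the constants side while the $\mathbf{S}$–dependent factors assemble into $q_{-\kappa-\tau}[(\cdot)^{-1}]\,q_{\tau}[(\cdot)^{-1}]$ with the right arguments. Because the change of variables is affine and the Cholesky factors are upper triangular, no Stiefel or orthogonal conjugation intervenes, so the algebra reduces to repeated use of (\ref{qk2}), (\ref{qk5}) and (\ref{qk6})—tedious but routine.
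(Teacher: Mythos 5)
Your proposal is correct and follows exactly the paper's route: the paper proves Corollary \ref{corTRII} by declaring it ``a verbatim copy'' of the proof of Corollary \ref{corTRI}, which is precisely your plan --- substitute $\mathbf{T} = \u(\mathbf{\Delta})(\mathbf{S}-\boldsymbol{\mu})\u(\mathbf{\Pi})^{-1}$ into the density (\ref{DTRII}) and apply Proposition \ref{lemlt} to get $(d\mathbf{T}) = |\mathbf{\Delta}|^{m\beta/2}|\mathbf{\Pi}|^{-n\beta/2}(d\mathbf{S})$. If anything, your account of how the $q_{\kappa}$, $q_{\tau}$ and $q_{-\kappa-\tau}$ factors recombine via (\ref{qk2}), (\ref{qk5}) and (\ref{qk6}) is more detailed than what the paper records.
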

\begin{proof}
The proof is a verbatim copy of given to Corollary \ref{corTRI}.
\end{proof}

Observe that if theorems \ref{teo1}, \ref{teo11}, \ref{teo31} and \ref{teo32} and corollaries
\ref{corTRI} and \ref{corTRII} are defined $\kappa = (0, \dots,0)$ and $\tau = (0, \dots,0)$, results in
\citet{dggj:12} and \citet{di:67} (with $\beta = 1$) are obtained as particular cases.

\begin{remark}
Alternatively, observe that the random matrix $\mathbf{T}$ can be defined as follows: Let $\kappa_{_{1}}
= k_{11}, k_{12}, \dots, k_{1n})\in \Re^{n}$, and $\tau_{_{1}} = (t_{11}, t_{12}, \dots, t_{1n})\in
\Re^{n}$. Also define $\mathbf{T}\in \mathfrak{L}_{n,m}^{\beta}$ as
$$
  \mathbf{T}_{1} = \u(\mathbf{U}_{1})^{-1}\mathbf{Y}
$$
where $\u(\mathbf{U}_{1})^{*} \in \mathfrak{T}_{U}^{\beta}(n)$ is such that $\mathbf{U}_{1} =
\u(\mathbf{U}_{1})\u(\mathbf{U}_{1})^{*}$ is the Cholesky decomposition of $\mathbf{U}_{1} \sim
\mathcal{R}_{n}^{\beta,I}(a\beta/2, \kappa_{_{1}},\mathbf{I}_{n})$, $\re(a\beta/2)> (n-1)\beta/2-k_{1n}$
independent of $\mathbf{Y} = \mathbf{X}^{*} \sim \mathcal{KR}_{n \times m}^{\beta,I}(\tau_{_{1}},
\mathbf{0}, \mathbf{I}_{n},\mathbf{I}_{m})$, $\re(m\beta/2)> (n-1)\beta/2-t_{1n}$. Then,
$$
  \mathbf{T}_{1} \sim \mathcal{TR}_{n \times m}^{\beta,I}(a\beta/2,\kappa_{_{1}}, \tau_{_{1}}, \mathbf{0},
\mathbf{I}_{n}, \mathbf{I}_{m}).
$$
Then, the corresponding density is obtained from (\ref{DTRI}) making
the following substitutions
\begin{equation}\label{ss}
    \mathbf{T} \rightarrow \mathbf{T}_{1}^{*}, \quad m \rightarrow n, \quad n \rightarrow m, \quad \nu
  \rightarrow a
\end{equation}
and thus, $\kappa \rightarrow \kappa_{1}$, $\tau \rightarrow \tau_{1},$ and $k_{i} \rightarrow k_{1i}
\quad t_{i} \rightarrow t_{1i}$.

Analogously, the distribution of $\mathbf{S}_{1} = \u(\mathbf{\Theta})\mathbf{T}_{1}
\u(\mathbf{\Pi})^{-1} + \boldsymbol{\mu}$, with $\u(\mathbf{\Theta})^{*} \in
\mathfrak{T}_{U}^{\beta}(n)$, and $\u(\mathbf{\Pi})^{*} \in \mathfrak{T}_{U}^{\beta}(m)$ are such that
$\mathbf{\Theta} =\u(\mathbf{\Theta})\u(\mathbf{\Theta})^{*}$ and $\mathbf{\Pi} =
\u(\mathbf{\Pi})\u(\mathbf{\Pi})^{*}$ also is obtained from Corollary \ref{corTRI} making the
substitutions (\ref{ss}). In analogous way the same results assuming a T-Riesz type II are obtained.
\end{remark}

\section{Matricvariate beta-Riesz distribution type II}\label{sec4}

Let $\mathbf{F} \in \mathfrak{P}_{m}^{\beta}$ be defined as $\mathbf{F} = \mathbf{T}^{*}\mathbf{T}$, such
that $\mathbf{T} \sim \mathcal{TR}_{n \times m}^{\beta,I}(\nu,\kappa, \tau, \boldsymbol{0},
\mathbf{I}_{m}, \mathbf{I}_{n})$ with $n \geq m$. Then, under the conditions of Theorem \ref{teo31}, we
have
\begin{eqnarray*}
  \mathbf{F} &=& \u(\mathbf{U})^{*-1}\mathbf{X}^{*}\mathbf{X}\u(\mathbf{U})^{-1} \\
   &=& \u(\mathbf{U})^{*-1}\mathbf{W}\u(\mathbf{U})^{-1}
\end{eqnarray*}
where $\mathbf{W}=\mathbf{X}^{*}\mathbf{X} \sim \mathcal{R}_{m}^{\beta,I}(n\beta/2,
\tau,\mathbf{I}_{m})$, with $\re(n\beta/2) >(m-1)\beta/2-t_{m}$. Thus:
\begin{theorem}\label{teo2}
The density of $\mathbf{F}$ is
\begin{equation}\label{FII1}
    \propto |\mathbf{F}|^{(n-m+1)\beta/2-1}|\mathbf{I}_{m}+\mathbf{F}|^{-(n+\nu)\beta/2}
    q_{-\kappa-\tau}(\mathbf{I}_{m}+\mathbf{F})q_{\tau}(\mathbf{F})(d\mathbf{F}),
\end{equation}
with the constant of proportionality defined by
$$
  \frac{1}{\mathcal{B}_{m}^{\beta}[\nu \beta /2, \kappa;n\beta/2, \tau]},
$$
where $\re(\nu\beta/2)> (m-1)\beta/2-k_{m}$ and $\re(n\beta/2)> (m-1)\beta/2-t_{m}$. It is said that
$\mathbf{F}$ has a \emph{matricvariate c-beta-Riesz type II distribution}.
\end{theorem}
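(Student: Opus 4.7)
The plan is to derive the density of $\mathbf{F}=\mathbf{T}^{*}\mathbf{T}$ directly from the joint density of the two independent Riesz ingredients $\mathbf{U}$ and $\mathbf{W}=\mathbf{X}^{*}\mathbf{X}$, rather than starting from the (already-integrated) matricvariate $T$-Riesz density given in (\ref{DTRI}). Under the construction preceding the statement, $\mathbf{U}\sim\mathcal{R}^{\beta,I}_{m}(\nu\beta/2,\kappa,\mathbf{I}_{m})$ is independent of $\mathbf{W}\sim\mathcal{R}^{\beta,I}_{m}(n\beta/2,\tau,\mathbf{I}_{m})$, and $\mathbf{F}=\u(\mathbf{U})^{*-1}\mathbf{W}\,\u(\mathbf{U})^{-1}$; equivalently $\mathbf{W}=\u(\mathbf{U})^{*}\mathbf{F}\,\u(\mathbf{U})$. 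The plan is to write the joint density of $(\mathbf{U},\mathbf{W})$, switch from $\mathbf{W}$ to $\mathbf{F}$, and integrate $\mathbf{U}$ out.

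First, I would apply Proposition \ref{lemhlt} with $\mathbf{A}=\u(\mathbf{U})^{*}$ to obtain
\begin{equation*}
(d\mathbf{W}) = |\mathbf{U}|^{(m-1)\beta/2+1}(d\mathbf{F}),
\end{equation*}
and simultaneously use the cyclic property of the trace together with (\ref{qk5}) to rewrite
\begin{equation*}
\etr\{-\beta\mathbf{W}\}=\etr\{-\beta(\mathbf{I}_{m}+\mathbf{F})\mathbf{U}\}\,\etr\{\beta\mathbf{U}\},\qquad q_{\tau}(\mathbf{W})=q_{\tau}(\mathbf{U})\,q_{\tau}(\mathbf{F}),
\end{equation*}
after splitting $\etr\{-\beta\mathbf{U}\}\etr\{-\beta\mathbf{W}\}$ into a clean exponential in $(\mathbf{I}_{m}+\mathbf{F})\mathbf{U}$. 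Also $|\mathbf{W}|=|\mathbf{U}|\,|\mathbf{F}|$.

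Second, I would collect powers and use (\ref{qk41}) to combine the two highest-weight factors in $\mathbf{U}$ into $q_{\kappa+\tau}(\mathbf{U})$. The joint density of $(\mathbf{U},\mathbf{F})$ then factors as a constant times
\begin{equation*}
\etr\{-\beta(\mathbf{I}_{m}+\mathbf{F})\mathbf{U}\}\,|\mathbf{U}|^{(\nu+n)\beta/2-p}\,q_{\kappa+\tau}(\mathbf{U})\cdot |\mathbf{F}|^{n\beta/2-p}\,q_{\tau}(\mathbf{F}),
\end{equation*}
with $p=(m-1)\beta/2+1$. The $\mathbf{U}$-marginal integral is then a generalised gamma integral of the form (\ref{int1}) after a linear change of variable in $\mathbf{U}$: its value is, up to the appropriate $\beta$-power, $\Gamma^{\beta}_{m}[(\nu+n)\beta/2,\kappa+\tau]\,|\mathbf{I}_{m}+\mathbf{F}|^{-(\nu+n)\beta/2}\,q_{-\kappa-\tau}(\mathbf{I}_{m}+\mathbf{F})$, where the last factor arises via homogeneity of $q_{\kappa+\tau}$ and the convention already used in the definition of $\mathcal{B}^{\beta}_{m}$ earlier in the paper.

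Finally, I would collect the remaining constants: the two Riesz normalisers $\beta^{m\nu\beta/2+\sum k_{i}}/\Gamma^{\beta}_{m}[\nu\beta/2,\kappa]$ and $\beta^{mn\beta/2+\sum t_{i}}/\Gamma^{\beta}_{m}[n\beta/2,\tau]$ combine with the integrated $\Gamma^{\beta}_{m}[(\nu+n)\beta/2,\kappa+\tau]$ and with the $\beta$-factors produced by the change of variable in the gamma integral. By the identity $\mathcal{B}^{\beta}_{m}[\nu\beta/2,\kappa;n\beta/2,\tau]=\Gamma^{\beta}_{m}[\nu\beta/2,\kappa]\Gamma^{\beta}_{m}[n\beta/2,\tau]/\Gamma^{\beta}_{m}[(\nu+n)\beta/2,\kappa+\tau]$ recorded in the generalised c-beta formula of Section \ref{sec2}, the constant collapses to $1/\mathcal{B}^{\beta}_{m}[\nu\beta/2,\kappa;n\beta/2,\tau]$, giving (\ref{FII1}). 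The main obstacle is bookkeeping: keeping all powers of $\beta$ consistent across Definition \ref{defR}.1 (which carries nontrivial $\beta$-scaling) and ensuring the highest-weight factor produced by the $\mathbf{U}$-integration is expressed as $q_{-\kappa-\tau}(\mathbf{I}_{m}+\mathbf{F})$ in the paper's convention rather than as $q^{*}_{(\kappa+\tau)^{*}}((\mathbf{I}_{m}+\mathbf{F})^{-1})$; this is a transcription via (\ref{qk2})--(\ref{qqk1}) but easy to mishandle.
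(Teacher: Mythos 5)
Your route is genuinely different from the paper's. The paper's proof of Theorem \ref{teo2} is a two-line polar argument: since the matricvariate $T$-Riesz density (\ref{DTRI}) depends on $\mathbf{T}$ only through $\mathbf{F}=\mathbf{T}^{*}\mathbf{T}$, Proposition \ref{lemW} gives $(d\mathbf{T}) = 2^{-m}|\mathbf{F}|^{\beta(n-m+1)/2-1}(d\mathbf{F})(\mathbf{V}_{1}^{*}d\mathbf{V}_{1})$, and integrating over the Stiefel manifold via (\ref{vol}) cancels the factor $\Gamma_{m}^{\beta}[n\beta/2]/\pi^{mn\beta/2}$ exactly, yielding (\ref{FII1}) at once. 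You instead bypass (\ref{DTRI}) and convolve the two independent Riesz ingredients directly; this is precisely the ``alternative way'' the paper attributes to \citet{dg:15b} immediately after Theorem \ref{teo21}. Your approach buys independence from theorems \ref{teo31} and \ref{teo1} (whose own proofs already absorbed the hard work), at the cost of redoing, inside the $\mathbf{U}$-integral, the triangular-coordinate machinery that the generalised c-beta identity of Section \ref{sec2} packages. Several of your steps are correct as written: the Jacobian $(d\mathbf{W})=|\mathbf{U}|^{(m-1)\beta/2+1}(d\mathbf{F})$ from Proposition \ref{lemhlt}, the factorisation $q_{\tau}(\mathbf{W})=q_{\tau}(\mathbf{U})q_{\tau}(\mathbf{F})$ from (\ref{qk5}), the exponent count giving $|\mathbf{U}|^{(\nu+n)\beta/2-p}$ and $|\mathbf{F}|^{n\beta/2-p}$, and the final collapse of constants (including the $\beta$-powers) through $\mathcal{B}_{m}^{\beta}[\nu\beta/2,\kappa;n\beta/2,\tau]$.

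However, your central factorisation $\etr\{-\beta\mathbf{W}\}=\etr\{-\beta(\mathbf{I}_{m}+\mathbf{F})\mathbf{U}\}\etr\{\beta\mathbf{U}\}$ is false. Cyclicity gives $\tr\mathbf{W}=\tr\left(\u(\mathbf{U})^{*}\mathbf{F}\,\u(\mathbf{U})\right)=\tr\left(\mathbf{F}\,\u(\mathbf{U})\u(\mathbf{U})^{*}\right)$, and $\u(\mathbf{U})\u(\mathbf{U})^{*}\neq\u(\mathbf{U})^{*}\u(\mathbf{U})=\mathbf{U}$ in general: for $\beta=1$, $m=2$, $\mathbf{U}=\left(\begin{smallmatrix}1 & 1\\ 1 & 2\end{smallmatrix}\right)$ has $\u(\mathbf{U})\u(\mathbf{U})^{*}=\left(\begin{smallmatrix}2 & 1\\ 1 & 1\end{smallmatrix}\right)$, so with $\mathbf{F}=\diag(1,0)$ your identity reads $2=1$. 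The correct combination is $\tr\mathbf{U}+\tr\mathbf{W}=\tr\left(\u(\mathbf{U})^{*}(\mathbf{I}_{m}+\mathbf{F})\u(\mathbf{U})\right)$, and consequently the $\mathbf{U}$-marginal is \emph{not} a gamma integral (\ref{int1}) ``after a linear change of variable'': the substitution that linearises the exponent is $\mathbf{V}$ with Cholesky factor $\u(\mathbf{V})=\u(\mathbf{I}_{m}+\mathbf{F})\u(\mathbf{U})$, which is nonlinear in $\mathbf{U}$ and must be handled in triangular coordinates. It is through (\ref{qk5})--(\ref{qk6}) applied to this triangular substitution --- not through homogeneity of $q_{\kappa+\tau}$ --- that the factors $|\mathbf{I}_{m}+\mathbf{F}|^{-(\nu+n)\beta/2}$ and, in the paper's convention, exactly $q_{-\kappa-\tau}(\mathbf{I}_{m}+\mathbf{F})$ arise. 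This matters beyond bookkeeping: had your exponent genuinely been $\etr\{-\beta(\mathbf{I}_{m}+\mathbf{F})\mathbf{U}\}$, a Laplace-transform evaluation would return $q_{\kappa+\tau}$ of $(\mathbf{I}_{m}+\mathbf{F})^{-1}$ in the starred convention of (\ref{qk2}) --- the very pitfall you flag at the end --- so the slip sits at the one step where the highest-weight factor of (\ref{FII1}) is created. With that step repaired along the lines above, your derivation closes and reproduces the theorem.
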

\begin{proof}
The proof follows from (\ref{DTRI}) by applying (\ref{w}) and then (\ref{vol}).
\end{proof}

Analogously, under the conditions of Theorem \ref{teo32}, we have the following result.

\begin{theorem}\label{teo21}
The density of $\mathbf{F}$ is
\begin{equation}\label{FII2}
    \propto |\mathbf{F}|^{(n-m+1)\beta/2-1}
    |\mathbf{I}_{m}+\mathbf{F}|^{-(n+\nu)\beta/2} q_{-\kappa-\tau}\left[(\mathbf{I}_{m}+\mathbf{F})^{-1}
    \right]q_{\tau}\left[\mathbf{F^{-1}}\right](d\mathbf{F}),
\end{equation}
where the constant of proportionality is
$$
  \frac{1}{\mathcal{B}_{m}^{\beta}[\nu \beta /2, -\kappa;n\beta/2, -\tau]},
$$
where $\re(\nu\beta/2)> (m-1)\beta/2+k_{1}$ and $\re(n\beta/2)> (m-1)\beta/2+t_{1}$. It is said that
$\mathbf{F}$ has a \emph{matricvariate k-beta-Riesz type II distribution}.
\end{theorem}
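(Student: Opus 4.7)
The plan is to mirror exactly the argument sketched for Theorem \ref{teo2}, but starting from the type II T-Riesz density (\ref{DTRII}) instead of (\ref{DTRI}). The only quantities in (\ref{DTRII}) that depend on $\mathbf{T}$ (rather than only on $\mathbf{T}^{*}\mathbf{T}$) enter through the volume element $(d\mathbf{T})$; the factors $|\mathbf{I}_{m}+\mathbf{T}^{*}\mathbf{T}|^{-(\nu+n)\beta/2}$, $q_{-\kappa-\tau}[(\mathbf{I}_{m}+\mathbf{T}^{*}\mathbf{T})^{-1}]$ and $q_{\tau}[(\mathbf{T}^{*}\mathbf{T})^{-1}]$ already depend only on $\mathbf{F}=\mathbf{T}^{*}\mathbf{T}$, so upon the change of variable they turn, verbatim, into the three corresponding factors displayed in (\ref{FII2}).

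The first step is to decompose $\mathbf{T}=\mathbf{V}_{1}\mathbf{T}_{0}$ with $\mathbf{V}_{1}\in\mathcal{V}_{m,n}^{\beta}$ and $\mathbf{T}_{0}\in\mathfrak{T}_{U}^{\beta}(m)$ having positive diagonal, so that $\mathbf{F}=\mathbf{T}_{0}^{*}\mathbf{T}_{0}\in\mathfrak{P}_{m}^{\beta}$. Applying Proposition \ref{lemW} gives
\begin{equation*}
(d\mathbf{T}) \;=\; 2^{-m}\,|\mathbf{F}|^{\beta(n-m+1)/2-1}\,(d\mathbf{F})\,(\mathbf{V}_{1}^{*}d\mathbf{V}_{1}).
\end{equation*}
Since nothing else in (\ref{DTRII}) depends on $\mathbf{V}_{1}$, I integrate $\mathbf{V}_{1}$ out over $\mathcal{V}_{m,n}^{\beta}$ and invoke (\ref{vol}), which contributes the factor $2^{m}\pi^{mn\beta/2}/\Gamma_{m}^{\beta}[n\beta/2]$. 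The $2^{-m}$ from Proposition \ref{lemW} cancels the $2^{m}$ from the Stiefel volume, and the $\pi^{mn\beta/2}$ cancels the $\pi^{mn\beta/2}$ in the denominator of (\ref{DTRII}), while the $\Gamma_{m}^{\beta}[n\beta/2]$ in the numerator of (\ref{DTRII}) cancels the one produced by $\Vol(\mathcal{V}_{m,n}^{\beta})$.

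After these cancellations the remaining normalising constant is exactly $1/\mathcal{B}_{m}^{\beta}[\nu\beta/2,-\kappa;n\beta/2,-\tau]$, and the density in $\mathbf{F}$ is
\begin{equation*}
\propto |\mathbf{F}|^{(n-m+1)\beta/2-1}|\mathbf{I}_{m}+\mathbf{F}|^{-(n+\nu)\beta/2}
q_{-\kappa-\tau}\!\left[(\mathbf{I}_{m}+\mathbf{F})^{-1}\right] q_{\tau}\!\left[\mathbf{F}^{-1}\right](d\mathbf{F}),
\end{equation*}
which is (\ref{FII2}). The parameter constraints $\re(\nu\beta/2)>(m-1)\beta/2+k_{1}$ and $\re(n\beta/2)>(m-1)\beta/2+t_{1}$ are simply inherited from Theorem \ref{teo32} and from the definition of the generalised k-beta function, ensuring the beta constant is well defined.

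I expect no genuine obstacle: the computation is routine once (\ref{DTRII}) is written out and Proposition \ref{lemW} together with (\ref{vol}) is invoked. The only point that needs care is bookkeeping of the generalised power factors, specifically making sure that the arguments inside $q_{-\kappa-\tau}$ and $q_{\tau}$ are inverted throughout (in contrast with the type I proof of Theorem \ref{teo2}) so that the type II form of $\mathcal{B}_{m}^{\beta}$ is produced and the exponents of $|\mathbf{F}|$ and $|\mathbf{I}_{m}+\mathbf{F}|$ match (\ref{FII2}) exactly.
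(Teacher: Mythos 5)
Your proposal is correct and follows essentially the same route as the paper: the paper proves the type I analogue (Theorem \ref{teo2}) by applying Proposition \ref{lemW} (equation (\ref{w})) and then the Stiefel volume (\ref{vol}) to the T-Riesz density, and presents Theorem \ref{teo21} as the same computation applied to (\ref{DTRII}), which is exactly what you do. Your explicit bookkeeping of the cancelling constants $2^{\pm m}$, $\pi^{mn\beta/2}$ and $\Gamma_{m}^{\beta}[n\beta/2]$, and your observation that the $q$-factors depend on $\mathbf{T}$ only through $\mathbf{T}^{*}\mathbf{T}$, correctly fill in the details the paper leaves implicit.
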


Observe that, results in theorems \ref{teo2} and \ref{teo21} were obtained by \citet{dg:15b} via an
alternative way.

To conclude this section, suppose that $\u(\mathbf{\Theta}) \in \mathfrak{T}_{U}^{\beta}(m)$ is constant
matrix such that $\mathbf{\Theta} = \u(\mathbf{\Theta})^{*}\u(\mathbf{\Theta}) \in
\mathfrak{P}_{m}^{\beta}$. Also, define $\mathbf{Z} = \u(\mathbf{\Theta})^{*} \mathbf{F}
\u(\mathbf{\Theta})$, therefore:
\begin{corollary}
\begin{enumerate}
  \item If $\mathbf{F}$ has a \emph{c-beta-Riesz type II distribution}, the density of $\mathbf{Z}$ is
  \begin{equation}
    \propto |\mathbf{Z}|^{(n-m+1)\beta/2-1}
    |\mathbf{\Theta}+\mathbf{Z}|^{-(n+\nu)\beta/2} q_{-\kappa-\tau}(\mathbf{\Theta}+
    \mathbf{Z})q_{\tau}(\mathbf{Z})(d\mathbf{Z}),
  \end{equation}
  with constant of proportionality
  $$
    \frac{|\mathbf{\Theta}|^{\nu\beta/2}q_{\kappa}(\mathbf{\Theta})}{\mathcal{B}_{m}^{\beta}[\nu\beta/2,
    \kappa;n\beta/2, \tau]},
  $$
  where $\re([\nu\beta/2)> (m-1)\beta/2-k_{m}$ and $\re(m\beta/2)> (m-1)\beta/2-t_{m}$.
  $\mathbf{Z}$ is said to have a \emph{nonstandardised matricvariate c-beta-Riesz type II distribution}.
  \item If $\mathbf{F}$ has a \emph{k-beta-Riesz type II distribution}, the density of $\mathbf{Z}$ is
  \begin{equation}
    \propto |\mathbf{Z}|^{(n-m+1)\beta/2-1}
    |\mathbf{\Theta}+\mathbf{Z}|^{-(n+\nu)\beta/2}
    q_{-\kappa-\tau}\left[(\mathbf{\Theta}+\mathbf{Z})^{-1}\right]
    q_{\tau}\left[\mathbf{Z}^{-1}\right](d\mathbf{Z}),
  \end{equation}
  with constant of proportionality
  $$
    \frac{|\mathbf{\Theta}|^{\nu\beta/2}q_{\kappa}\left(\mathbf{\Theta}^{-1}\right)}
    {\mathcal{B}_{m}^{\beta}[\nu\beta/2, -\kappa;n\beta/2, -\tau]},
  $$
  where $\re([\nu\beta/2)> (m-1)\beta/2+k_{1}$ and $\re(m\beta/2)> (m-1)\beta/2+t_{1}$.
  $\mathbf{Z}$ is said to have a \emph{non standardised matricvariate k-beta-Riesz type II distribution}.
\end{enumerate}
\end{corollary}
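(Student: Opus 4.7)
The plan is to obtain both densities by a direct change of variable $\mathbf{F} \mapsto \mathbf{Z} = \u(\mathbf{\Theta})^{*}\mathbf{F}\u(\mathbf{\Theta})$ in the densities (\ref{FII1}) and (\ref{FII2}) given by Theorems \ref{teo2} and \ref{teo21}. First, I would apply Proposition \ref{lemhlt} with $\mathbf{A} = \u(\mathbf{\Theta})^{*}$ to get the Jacobian
\[
  (d\mathbf{Z}) = |\u(\mathbf{\Theta})\u(\mathbf{\Theta})^{*}|^{(m-1)\beta/2+1}(d\mathbf{F})
              = |\mathbf{\Theta}|^{(m-1)\beta/2+1}(d\mathbf{F}),
\]
using that $|\u(\mathbf{\Theta})\u(\mathbf{\Theta})^{*}|=|\u(\mathbf{\Theta})|^{2}=|\mathbf{\Theta}|$. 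Inverting, $\mathbf{F}=\u(\mathbf{\Theta})^{*-1}\mathbf{Z}\u(\mathbf{\Theta})^{-1}$.

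Next I would rewrite each factor of the density in terms of $\mathbf{Z}$. The key algebraic identity is
\[
  \mathbf{I}_{m} = \u(\mathbf{\Theta})^{*-1}\,\mathbf{\Theta}\,\u(\mathbf{\Theta})^{-1},
\]
which gives $\mathbf{I}_{m}+\mathbf{F}=\u(\mathbf{\Theta})^{*-1}(\mathbf{\Theta}+\mathbf{Z})\u(\mathbf{\Theta})^{-1}$. Taking determinants yields $|\mathbf{F}|=|\mathbf{\Theta}|^{-1}|\mathbf{Z}|$ and $|\mathbf{I}_{m}+\mathbf{F}|=|\mathbf{\Theta}|^{-1}|\mathbf{\Theta}+\mathbf{Z}|$. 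For the generalised powers, applying property (\ref{qk6}) with $\mathbf{B}=\u(\mathbf{\Theta})$ and $\mathbf{C}=\mathbf{\Theta}$ gives
\[
  q_{\tau}(\mathbf{F}) = q_{-\tau}(\mathbf{\Theta})\,q_{\tau}(\mathbf{Z}), \qquad
  q_{-\kappa-\tau}(\mathbf{I}_{m}+\mathbf{F}) = q_{\kappa+\tau}(\mathbf{\Theta})\,q_{-\kappa-\tau}(\mathbf{\Theta}+\mathbf{Z}).
\]

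Substituting these into (\ref{FII1}) and collecting powers of $|\mathbf{\Theta}|$, the exponents combine as $-(n-m+1)\beta/2+1+(n+\nu)\beta/2-(m-1)\beta/2-1=\nu\beta/2$. For the $q$-functions, property (\ref{qk41}) together with $q_{-\tau}(\mathbf{\Theta})q_{\tau}(\mathbf{\Theta})=1$ (immediate from (\ref{qk1})) collapses $q_{\kappa+\tau}(\mathbf{\Theta})q_{-\tau}(\mathbf{\Theta})=q_{\kappa}(\mathbf{\Theta})$, producing exactly the stated constant $|\mathbf{\Theta}|^{\nu\beta/2}q_{\kappa}(\mathbf{\Theta})/\mathcal{B}_{m}^{\beta}[\nu\beta/2,\kappa;n\beta/2,\tau]$ and the claimed density for part 1.

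For part 2, the same change of variables and Jacobian apply to (\ref{FII2}); the only subtlety is handling $q_{\tau}(\mathbf{F}^{-1})$ and $q_{-\kappa-\tau}\bigl[(\mathbf{I}_{m}+\mathbf{F})^{-1}\bigr]$, which I would convert via (\ref{qk2}) to the starred form $q^{*}$ and then apply the corresponding multiplicative identity under upper-triangular conjugation (the $q^{*}$-analogue of (\ref{qk6})) to extract $q_{\kappa}(\mathbf{\Theta}^{-1})$ and the factors $q_{-\kappa-\tau}[(\mathbf{\Theta}+\mathbf{Z})^{-1}]$, $q_{\tau}[\mathbf{Z}^{-1}]$. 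The only real bookkeeping hurdle is this last step, ensuring the reversal in the indexing of $\kappa^{*}$ cancels consistently with the other starred factor; once it does, the same exponent arithmetic as in part 1 delivers the stated density with $\mathcal{B}_{m}^{\beta}[\nu\beta/2,-\kappa;n\beta/2,-\tau]$ in the denominator.
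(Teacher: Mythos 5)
Your proposal is correct and follows essentially the same route as the paper, whose proof is precisely the change of variable $\mathbf{Z} = \u(\mathbf{\Theta})^{*}\mathbf{F}\u(\mathbf{\Theta})$ applied to the densities (\ref{FII1}) and (\ref{FII2}) with the Jacobian from Proposition \ref{lemhlt}; your exponent arithmetic giving $|\mathbf{\Theta}|^{\nu\beta/2}$ and your use of (\ref{qk6}) and (\ref{qk41}) to collapse the $q$-factors to $q_{\kappa}(\mathbf{\Theta})$ are exactly the implicit steps the paper leaves to the reader. Your treatment of part 2 via (\ref{qk2}) and the $q^{*}$-multiplicativity under the opposite triangular conjugation is also the right mechanism and yields the stated factor $q_{\kappa}\left(\mathbf{\Theta}^{-1}\right)$, so the proposal is sound.
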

\begin{proof}
Proof follows from (\ref{FII1}) and (\ref{FII2}), respectively, by applying (\ref{hlt}).
\end{proof}

If $k =0$ and $\tau = (0, \dots,0)$ are set in the theorems and corollaries of this section, then the
results by \citet{dggj:12} are obtained as particular cases. Finally, under similar substitutions to
those specified in (\ref{ss}), we can obtain analogous results for the random matrices $\mathbf{F}_{1} =
\mathbf{T}_{1}\mathbf{T}_{1}^{*}$ and $\mathbf{Z}_{1} = \u(\mathbf{\Theta}) \mathbf{F}_{1}
\u(\mathbf{\Theta})^{*}$, where $\u(\mathbf{\Theta})^{*} \in \mathfrak{T}_{U}^{\beta}(m)$ is such that
$\mathbf{\Theta}=\u(\mathbf{\Theta})\u(\mathbf{\Theta})^{*}$.

\section*{Conclusions}

Note that if $\tau = (p,\dots,p)$ is defined  in section \ref{sec3}, then  the corresponding results for
the matricvariate Kotz type distribution are obtained as a particular case, see \citet{fl:99}.

Although during the 90's and 2000's were obtained important results in theory of random matrices
distributions, the  past 30 years have reached a substantial development. Essentially, these advances
have been archived through two approaches based on the \emph{theory of Jordan algebras} and the \emph{
theory of real normed division algebras}. A basic source of the mathematical tools of theory of random
matrices distributions under Jordan algebras can be found in \citet{fk:94}; and specifically, some works
in the context of theory of random matrices distributions based on Jordan algebras are provided in
\citet{m:94}, \citet{cl:96}, \citet{hl:01}, \citet{hlz:05}, \citet{hlz:08} and \citet{k:14} and the
references therein. Parallel results on theory of random matrices distributions based on real normed
division algebras have been also developed in random matrix theory and statistics, see \citet{gr:87},
\citet {f:05}, \citet{dggj:11}, \citet{dggj:13}, among others. In  addition, from mathematical point of
view, several basic properties of the matrix multivariate Riesz distribution under \emph{the structure
theory of normal $j$-algebras}  and under \emph{theory of Vinberg algebras} in place of Jordan algebras
have been studied, see \citet{i:00} and \citet{bh:09}, respectively.

From a applied point of view, the relevance of \emph{the octonions} remains unclear. An excellent review
of the history, construction and many other properties of octonions is given in \citet{b:02}, where it is
stated that:
\begin{center}
\begin{minipage}[t]{4in}
\begin{sl}
``Their relevance to geometry was quite obscure until 1925, when \'Elie Cartan described `triality' --
the symmetry between vector and spinors in 8-dimensional Euclidian space. Their potential relevance to
physics was noticed in a 1934 paper by Jordan, von Neumann and Wigner on the foundations of quantum
mechanics...Work along these lines continued quite slowly until the 1980s, when it was realised that the
octionions explain some curious features of string theory... \textbf{However, there is still no
\emph{proof} that the octonions are useful for understanding the real world}. We can only hope that
eventually this question will be settled one way or another."
\end{sl}
\end{minipage}
\end{center}

For the sake of completeness, in the present article the case of octonions is considered, but the
veracity of the results obtained for this case can only be conjectured. Nonetheless, \citet[Section
1.4.5, pp. 22-24]{f:05} it is proved that the bi-dimensional density function of the eigenvalue, for a
Gaussian ensemble of a $2 \times 2$ octonionic matrix, is obtained from the general joint density
function of the eigenvalues for the Gaussian ensemble, assuming $m = 2$ and $\beta = 8$, see Section
\ref{sec2}. Moreover, as is established in \citet{fk:94} and \citet{S:97} the result obtained in this
article are valid for the \emph{algebra of Albert}, that is when hermitian matrices ($\mathbf{S}$) or
hermitian product of matrices ($\mathbf{X}^{*}\mathbf{X}$) are $3 \times 3$ octonionic matrices.

\section*{Acknowledgements}
This research work was partially supported by IDI-Spain, Grants No. MTM2011-28962. This paper was written
during J. A. D\'{\i}az-Garc\'{\i}a's stay as a visiting professor at the Department of Statistics and O.
R. of the University of Granada, Spain.

\end{document}